\newtheorem*{lemma*}{Lemma}
\newtheorem*{theorem*}{Theorem}
\newtheorem*{conjecture*}{Conjecture}
\newtheorem{theorem}{Theorem}[section]
\newtheorem{proposition}[theorem]{Proposition}
\newtheorem{lemma}[theorem]{Lemma}
\newtheorem*{thma}{Theorem A}
\newtheorem*{thmb}{Theorem B}
\newtheorem{corollary}[theorem]{Corollary}
\newtheorem{conjecture}[theorem]{Conjecture}
\newtheorem*{claim*}{Claim}
\theoremstyle{definition}
\newtheorem{definition}[theorem]{Definition}
\newtheorem*{question*}{Question}
\theoremstyle{remark}
\newtheorem{remark}[theorem]{Remark}
\newcommand{\ind}{\textup{Ind}}
\newcommand{\supp}{\textup{Supp}}
\newcommand{\ev}{\textup{ev}}
\newcommand{\fin}{\textup{fin}}
\newcommand{\rfK}{\widetilde K}
\newcommand{\ord}{\textup{order}}
\newcommand{\diff}{\textup{Diff}}
\begin{document}
\title{Higher rho invariants and the moduli space of positive scalar curvature metrics}

\author{Zhizhang Xie\thanks{Email: \texttt{xie@math.tamu.edu}; partially supported by AMS-Simons Travel Grant.} }
\author{Guoliang Yu\thanks{Email: \texttt{guoliangyu@math.tamu.edu}; partially supported by the US National Science Foundation.}}
\affil{Department of Mathematics, Texas A\&M University}
\date{}

\maketitle

\begin{abstract}
Given a closed smooth manifold $M$ which carries a positive scalar curvature metric, one can associate an abelian group $P(M)$ to the space of positive scalar curvature metrics on this manifold. The group of all diffeomorphisms of the manifold naturally acts on $P(M)$. The moduli group $\widetilde P(M)$ of positive scalar curvature metrics  is defined to be the quotient abelian group of this action, i.e. the coinvariant of the action. $\widetilde P(M)$ measures the size of the moduli space of positive scalar curvature metrics on $M$.
 In this paper, we use the higher rho invariant and the finite part of the $K$-theory of the group $C^\ast$-algebra of $\pi_1(M)$ to give a lower bound of the rank of the moduli group $\widetilde P(M)$.
\end{abstract}

\section{Introduction}

Let $M$ be a closed smooth manifold. Suppose $M$ carries a metric of positive scalar curvature. It is well known that the space of all Rimennian metrics on $M$ is contractible, hence topologically trivial. To the contrary, the space of all positive scalar curvature metrics on $M$, denoted by $\mathcal R^+(M)$, often has very nontrivial topology. For example, the homotopy groups of $\mathcal R^+(M)$ often contain many nontrivial elements (cf. \cite{Hanke:2012fk, NH74}). In particular, $\mathcal R^+(M)$ is often not connected and in fact has infinitely many connected components (cf. \cite{ MR1339924, ELPP01, MR2366359, MR1268192}). For example, by using the Cheeger-Gromov $L^2$-rho invariant \cite{MR806699} and the delocalized eta invariant of Lott \cite{MR1726745},  Piazza and Schick showed that $\mathcal R^+(M)$ has finitely many connected components, if $M$ is a closed spin manifold with $\dim M = 4k+3 \geq 5$ and $\pi_1(M)$ contains torsion \cite[Theorem 1.3]{MR2366359}. 

Following Stolz \cite{stolz-concordance, SS95}, Weinberger and Yu introduced an abelian group $P(M)$ to measure the size of the space of positive scalar curvature metrics \cite{Weinberger:2013qy}. In addition,  they used the finite part of $K$-theory of the maximal group $C^\ast$-algebra $C^\ast_{\max}(\pi_1(M))$ to give a lower bound of the rank of $P(M)$. A special case of their theorem states that the rank of $P(M)$ is  $\geq 1$, if $M$ is a closed spin manifold with $\dim M = 2k+1\geq 5$ and $\pi_1(M)$ contains torsion \cite[Theorem 4.1]{Weinberger:2013qy}. In particular, this implies the above theorem of Piazza and Schick. 

In this paper, inspired by the results of Piazza and Schick \cite{MR2366359} and Weinberger and Yu \cite{Weinberger:2013qy}, we use the finite part of $K$-theory of $C^\ast_{\max}(\pi_1(M))$ to study the moduli space of positive scalar curvature metrics.  Recall that the group of diffeomorphisms on $M$, denoted by $\diff(M)$, acts on $\mathcal R^+(M)$ by pulling back the metrics. The moduli space of positive scalar curvature metrics is defined to be the quotient space $\mathcal R^+(M)/\diff(M)$. Similarly, $\diff(M)$ acts on the group $P(M)$ and we denote the coinvariant of the action by $\widetilde P(M)$. That is, $\widetilde P(M) = P(M)/P_0(M)$ where $P_0(M)$ is the subgroup of $P(M)$ generated by elements of the form $[x] - \psi^\ast[x]$ for all $ [x]\in P(M)$ and all $\psi\in \diff(M)$. We call $\widetilde P(M)$ the moduli group of positive scalar curvature metrics on $M$. It measures the size of the moduli space of positive scalar curvature metrics on $M$. The following conjecture gives a lower bound for the rank of the abelian group $\widetilde{P}(M)$.
\begin{conjecture*}
Let $M$ be a closed spin manifold  with  $\pi_1(M) = \Gamma$ and $\dim M = 2k+1 \geq 5$,  which carries a positive scalar curvature metric.  Then the rank of the abelian group $\widetilde P(M)$ is  $\geq N_\fin(\Gamma)$, where $N_\fin(\Gamma)$ is the cardinality of the following collection of positive integers: 
\[ \big\{ d \in \mathbb N_+ \mid \exists \gamma\in \Gamma \ \textup{such that}\ \ord(\gamma) = d \ \textup{and} \ \gamma\neq e\big\}. \]
\end{conjecture*}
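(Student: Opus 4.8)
The plan is to run each of the $N_\fin(\Gamma)$ finite‑part $K$‑theory classes of Weinberger--Yu through a diffeomorphism‑equivariant version of the higher rho invariant, and then to show that these classes survive, still linearly independent, in the $\diff(M)$‑coinvariants. Write $n=\dim M=2k+1$ and $\Gamma=\pi_1(M)$. For a positive scalar curvature metric $g$ on $M$ the higher rho invariant produces a class $\rho(g)$ in the analytic structure group $K_n\big(C^\ast_{L,0}(\widetilde M)^\Gamma\big)$, and a pair of psc metrics agreeing at infinity (after stabilizing) has a well‑defined relative higher index in $K_{n+1}(C^\ast_{\max}\Gamma)$; these assemble into a homomorphism $\Phi\colon P(M)\to K_{n+1}(C^\ast_{\max}\Gamma)$ (read modulo the image of the assembly map where needed). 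All of these constructions are natural for diffeomorphisms, so a $\psi\in\diff(M)$ --- which induces an outer automorphism $\psi_\ast$ of $\Gamma$ and hence an automorphism of $K_{n+1}(C^\ast_{\max}\Gamma)$ --- satisfies $\Phi(\psi^\ast[x])=\psi_\ast^{-1}\Phi([x])$. Hence $\Phi$ sends the generator $[x]-\psi^\ast[x]$ of $P_0(M)$ to $(1-\psi_\ast^{-1})\Phi([x])$ and descends to
\[
\widetilde\Phi\colon \widetilde P(M)\longrightarrow K_{n+1}(C^\ast_{\max}\Gamma)_G,\qquad G:=\mathrm{image}\big(\diff(M)\to\mathrm{Out}(\Gamma)\big),
\]
where $(-)_G$ denotes $G$‑coinvariants. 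It therefore suffices to produce $N_\fin(\Gamma)$ elements of $\widetilde P(M)$ with rationally independent images under $\widetilde\Phi$.

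\textbf{Step 2 (realizing the finite part by metrics).} Fix $\gamma_1,\dots,\gamma_N\in\Gamma\setminus\{e\}$ of pairwise distinct finite orders $d_1,\dots,d_N$ ($N=N_\fin(\Gamma)$) and the associated idempotents $p_i=\frac1{d_i}\sum_{j=0}^{d_i-1}\gamma_i^{\,j}\in C^\ast_{\max}\Gamma$, with finite‑part classes $[p_i]\in K_0(C^\ast_{\max}\Gamma)$. Exactly as in Weinberger--Yu --- using Stolz's connected‑sum and surgery constructions (valid since $n\geq 5$) together with index theory for Dirac operators on spin manifolds mapping to $\underline E\Gamma$ --- one builds, from a fixed reference metric $g_0$, psc metrics $g_1,\dots,g_N$ on $M$ such that $\Phi([g_i]-[g_0])$ is, up to the usual periodicity identification, a prescribed class $z_i\in K_{n+1}(C^\ast_{\max}\Gamma)$ determined by the finite‑part idempotent $p_i$ (e.g.\ by $[p_i]-[1]$). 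The finite‑part‑of‑$K$‑theory theorem of Weinberger--Yu then gives that $z_1,\dots,z_N$ are linearly independent in $K_{n+1}(C^\ast_{\max}\Gamma)\otimes\mathbb Q$.

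\textbf{Step 3 (survival in the coinvariants).} It remains to see that $z_1,\dots,z_N$ stay linearly independent in $K_{n+1}(C^\ast_{\max}\Gamma)_G\otimes\mathbb Q$; equivalently, I would exhibit $N$ many $G$‑invariant functionals on $K_{n+1}(C^\ast_{\max}\Gamma)\otimes\mathbb Q$ whose matrix on $\{z_i\}$ is nonsingular, since a $G$‑invariant functional factors through the coinvariants. The key structural fact is that automorphisms of $\Gamma$ preserve the order of an element, so $\psi_\ast$ carries $[p_i]$ to a finite‑part class of the same order $d_i$: the finite‑part subspace thus carries a $G$‑invariant filtration by order. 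Combining this order‑grading with the $\mathrm{Out}(\Gamma)$‑invariant traces that are always available --- the augmentation $\varepsilon$ and the von Neumann trace $\tau_{\mathrm{vN}}$, with $\varepsilon_\ast([p_i])=1$ and $\tau_{\mathrm{vN}}([p_i])=1/d_i$ --- and with the finite‑dimensional character pairings of the cyclic subgroups $\langle\gamma_i\rangle$ symmetrized over the orbit $G\cdot\gamma_i$, one should recover on the coinvariant level a nonsingular matrix by the same bookkeeping as in Weinberger--Yu, giving $\mathrm{rank}\,\widetilde P(M)\geq N_\fin(\Gamma)$.

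\textbf{Main obstacle.} The difficulty is concentrated in Step 3. The group $G$ may be infinite and may permute the order‑$d$ finite‑part idempotent classes in infinite orbits, so these classes are not individually $G$‑fixed and one cannot simply invoke invariance of the $z_i$; what is really needed is a genuine $G$‑equivariant, rational refinement of the Weinberger--Yu finite‑part theorem, i.e.\ one must control the image of the permutation module on finite cyclic subgroups of $\Gamma$ inside $K_{n+1}(C^\ast_{\max}\Gamma)\otimes\mathbb Q$ equivariantly, and check that the invariant traces and symmetrized character pairings still separate the $N$ distinct‑order classes in the quotient. When $G$ is finite, or when $\Gamma$ has only finitely many torsion elements of each order, the symmetrization above is literally finite and the argument goes through cleanly; the general case is what keeps the statement a conjecture, and by this mechanism one expects it under additional hypotheses on $\Gamma$ (e.g.\ finite embeddability into Hilbert space), or at least a lower bound counting those orders separable by $G$‑invariant traces.
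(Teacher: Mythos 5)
The fatal issues are in Steps 1 and 3, and they are precisely the two points the paper is organized around. In Step 1 your descent claim is unjustified: your $\Phi$ is the relative higher index taken against the fixed reference metric $g_M$, and naturality only gives $\ind_\Gamma(\psi^\ast g_M,\psi^\ast h)=\psi_\ast\,\ind_\Gamma(g_M,h)$ (up to conventions), so that $\Phi(\psi^\ast h)=\ind_\Gamma(g_M,\psi^\ast g_M)+\psi_\ast\Phi(h)$. The error term $\ind_\Gamma(g_M,\psi^\ast g_M)$ has no reason to vanish, to lie in the subgroup generated by elements $[x]-\psi_\ast[x]$, or to be absorbed by passing modulo the image of the assembly map; this is exactly why the action of $\diff(M)$ on $P(M)$ is affine rather than linear, and the paper explicitly remarks that it is \emph{not} clear that $\ind_{rel}$ descends to $K_0(C^\ast(\Gamma))/\mathcal I_0(C^\ast(\Gamma))$. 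The paper's way around this is to work with the higher rho invariant itself, $\varrho(h)=\rho(D,h)-\rho(D,g_M)\in K_1(C^\ast_{L,0}(\widetilde M)^\Gamma)$: in the difference $\varrho(h)-\varrho(\psi^\ast h)$ the reference terms cancel and one gets exactly $\rho(D,h)-\psi_\ast\rho(D,h)$, so the map descends to $K_1(C^\ast_{L,0}(\widetilde M)^\Gamma)/\mathcal I_1(C^\ast_{L,0}(\widetilde M)^\Gamma)$, i.e.\ to coinvariants of the genuine $\diff(M)$-action at the level of localization algebras, not of an $\textup{Out}(\Gamma)$-action on $K_\ast(C^\ast_{\max}(\Gamma))$. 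Your Step 2 (realizing $[p_{\gamma_i}]$ by metrics $h_{\gamma_i}$ via Weinberger--Yu) does match the paper.

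Step 3 is the second genuine gap, and you concede it yourself: invariant traces plus characters symmetrized over $G$-orbits break down when $G$ is infinite or the orbits of the order-$d$ classes are infinite, and in any case such delocalized functionals are not available on $K_0(C^\ast_{\max}(\Gamma))$ in the generality needed. Note also that the statement you are addressing is stated in the paper as a \emph{conjecture} and is not proved there in full generality either; what the paper proves (Theorem $\ref{thm:psc}$) assumes $\Gamma$ is \emph{strongly} finitely embeddable, and its mechanism for the independence-in-coinvariants step is entirely different from a trace argument. Given a hypothetical relation $\sum_k c_k\varrho(h_{\gamma_k})=\sum_{j=1}^m\big([x_j]-(\psi_j)_\ast[x_j]\big)$, only finitely many diffeomorphisms occur; one forms $X=M\times_{F_m}\widetilde W$ over the wedge $W$ of $m$ circles, so $\pi_1(X)=\Gamma\rtimes F_m$, pushes the relation by $i_\ast$ into $K_1(C^\ast_{L,0}(\widetilde X)^{\Gamma\rtimes F_m})$ where the Pimsner--Voiculescu sequences kill the $(1-(\psi_j)_\ast)$-terms, and then applies the Weinberger--Yu finite-part theorem to $\Gamma\rtimes F_m$ (this is exactly where strong finite embeddability enters --- plain finite embeddability of $\Gamma$ is not known to suffice) to conclude that $\partial_{\Gamma\rtimes F_m}\big(\sum_k c_k[p_{\gamma_k}]\big)$ is nonzero, contradicting the vanishing of $\sum_k c_k\, i_\ast\varrho(h_{\gamma_k})$. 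To turn your outline into a proof you would need to replace Step 1 by the rho-invariant descent and Step 3 by this finite-collection reduction, and correspondingly weaken the conclusion to the strongly finitely embeddable case (or, for rank $\geq 1$, to the non-torsion-free case), since the unconditional statement remains open.
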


In the main theorem (Theorem $\ref{thm:psc}$) of the paper, we show that the above conjecture holds for strongly finitely embeddable groups (see Section $\ref{sec:sfe}$ for the definition of strongly finite embeddability). 
\begin{thma}
Let $M$ be a closed spin manifold which carries a positive scalar curvature metric with $\dim M = 2k+1 \geq 5$. If the fundamental group $\Gamma = \pi_1(M)$ of $M$ is strongly finitely embeddable into Hilbert space, then the rank of the abelian group $\widetilde P(M)$ is $\geq N_\fin(\Gamma).$ 
\end{thma}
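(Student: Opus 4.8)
The plan is to produce, for each positive integer $d$ that occurs as the order of a nontrivial element $\gamma \in \Gamma$, a positive scalar curvature metric $g_d$ on $M$ whose class $[g_d] \in P(M)$ maps to an element of infinite order in $\widetilde P(M)$, and moreover so that these $N_\fin(\Gamma)$ classes remain $\mathbb{Z}$-linearly independent after passing to the coinvariants. The engine for detecting nontriviality in $P(M)$ is the higher rho invariant: given a psc metric $g$, one has a class $\rho(g) \in K_*(C^*_{L,0}(\widetilde M)^\Gamma)$, and there is a natural pairing (or map) landing in $K_*(C^*_{\max}(\Gamma))$ after applying the appropriate index/evaluation maps. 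The first step is therefore to recall the standard construction: starting from a fixed psc metric $g_0$, one builds new psc metrics by taking connected sum with spheres carrying the Gromov–Lawson type psc metrics, or more precisely by the Stolz exact sequence machinery, so that the difference of higher rho invariants $\rho(g) - \rho(g_0)$ equals the image of a prescribed element of $K_{*+1}(C^*_{\max}(\Gamma))$ coming from a torsion element $\gamma$ with $\ord(\gamma) = d$. This is exactly the mechanism by which Weinberger–Yu obtained rank $\geq N_\fin(\Gamma)$ for $P(M)$ itself under (ordinary) finite embeddability, and the finite-part classes $[p_\gamma] \in K_0(C^*_{\max}(\Gamma))$ associated to torsion elements of distinct orders are known (by the work cited) to span a subgroup of rank $N_\fin(\Gamma)$ when $\Gamma$ is finitely embeddable.

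The heart of the matter, and where strong finite embeddability enters, is to show that these classes survive in the \emph{coinvariant} group $\widetilde P(M) = P(M)/P_0(M)$ — i.e. that one cannot kill them using relations of the form $[x] - \psi^*[x]$ for $\psi \in \diff(M)$. The key point is that a diffeomorphism $\psi$ of $M$ induces an automorphism $\psi_*$ of $\Gamma = \pi_1(M)$ (well-defined up to inner automorphism and choice of basepoint), and under the higher rho invariant this automorphism acts on $K_*(C^*_{\max}(\Gamma))$ by the functorially induced map $(\psi_*)_*$. So the image of $P_0(M)$ in the target $K$-theory is contained in the subgroup generated by $\{v - (\psi_*)_* v\}$ over all automorphisms arising from diffeomorphisms. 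Thus it suffices to show: the span of the finite-part classes $\{[p_\gamma] : \ord(\gamma) = d,\ d \in \mathcal{D}\}$ has rank $\geq N_\fin(\Gamma)$ \emph{modulo} the subgroup generated by $v - \alpha_* v$ for $\alpha \in \operatorname{Aut}(\Gamma)$ (or at least those automorphisms realized by diffeomorphisms of this particular $M$). The crucial observation is that any automorphism $\alpha$ of $\Gamma$ permutes conjugacy classes of torsion elements but \emph{preserves the order}: if $\ord(\gamma) = d$ then $\ord(\alpha(\gamma)) = d$. Hence $\alpha_*$ preserves the "order filtration/grading" on the finite-part of $K$-theory, and the rank contributed by each distinct order $d$ is an $\operatorname{Aut}(\Gamma)$-invariant quantity; this is precisely what strong finite embeddability is designed to guarantee — that the finite part remains detectable in an automorphism-equivariant (hence coinvariant-stable) fashion, so that $N_\fin(\Gamma)$ independent classes persist in the quotient.

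Concretely, I would carry out the steps in this order: (1) recall the higher rho invariant map $\rho\colon P(M) \to K_{*}(C^*_{L,0}(\widetilde M)^\Gamma)$ and its compatibility with the $\diff(M)$-action, establishing that $\psi^*$ on $P(M)$ corresponds to the map induced by $\psi_*$ on the geometric $K$-groups and ultimately on $K_*(C^*_{\max}(\Gamma))$; (2) for each $d \in \mathcal{D}$ pick $\gamma_d$ of order $d$ and, via the Stolz positive-scalar-curvature exact sequence together with the construction from \cite{Weinberger:2013qy}, produce psc metrics whose rho classes realize the finite-part idempotents $[p_{\gamma_d}]$; (3) invoke strong finite embeddability to get a homomorphism from the subgroup of $K_0(C^*_{\max}(\Gamma))$ generated by all finite-part classes onto a free abelian group in which, for each order $d$, some class associated to order-$d$ elements maps to a basis vector, and crucially this homomorphism can be chosen $\operatorname{Aut}(\Gamma)$-invariant (or at least invariant under the relevant diffeomorphism-induced automorphisms) because automorphisms preserve orders; (4) conclude that the composite $\widetilde P(M) = P(M)/P_0(M) \to K_*(C^*_{\max}(\Gamma))_{\operatorname{Aut}} \to \mathbb{Z}^{N_\fin(\Gamma)}$ is surjective, giving $\operatorname{rank} \widetilde P(M) \geq N_\fin(\Gamma)$. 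The main obstacle I anticipate is step (3): making the detecting homomorphism genuinely invariant under the (possibly large and poorly understood) group of automorphisms of $\Gamma$ induced by $\diff(M)$ — this is exactly the technical content hidden in the definition of \emph{strong} finite embeddability (as opposed to ordinary finite embeddability), and verifying that the construction of \cite{Weinberger:2013qy} can be upgraded to respect this symmetry is the step requiring genuine care.
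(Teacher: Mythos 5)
Your overall framing (detect classes in $P(M)$ via higher rho invariants of the Weinberger--Yu metrics $h_\gamma$, then show the detection survives the quotient by $P_0(M)$ because the relevant relations in $K$-theory have the form $[x]-\psi_\ast[x]$) matches the paper, but the crucial step (3) of your plan has a genuine gap, and it is exactly the step where the actual content of strong finite embeddability lies. You propose to produce a detecting homomorphism on (the subgroup generated by) the finite part of $K_0(C^\ast_{\max}(\Gamma))$ that is invariant under all diffeomorphism-induced automorphisms, justified by the observation that automorphisms preserve the order of torsion elements. That observation is true but does not yield such a homomorphism: the coinvariant relations $[x]-\psi_\ast[x]$ range over \emph{all} classes $[x]$ in the receptacle of the rho invariant, $K_1(C^\ast_{L,0}(\widetilde M)^\Gamma)$, not just over finite-part classes, so order-preservation of torsion elements gives you no control over which combinations of rho classes can be written as sums of such differences. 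Moreover, strong finite embeddability is not an equivariance statement about detecting homomorphisms; it is, by definition, the statement that $\Gamma\rtimes_{\{\psi_1,\dots,\psi_m\}}F_m$ is finitely embeddable for every finite tuple of automorphisms. Nothing in your outline uses this, and without it you are effectively assuming the conclusion of your step (3). (There is also a smaller confusion: the rho invariant does not map to $K_\ast(C^\ast_{\max}(\Gamma))$; the object that lives there is the relative higher index, and the paper explicitly points out that it is \emph{not} clear the relative index descends to $\widetilde P(M)$ --- this is precisely why one must work in $K_1(C^\ast_{L,0}(\widetilde M)^\Gamma)$ and its quotient.)

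The mechanism the paper uses instead is worth contrasting with your plan. Suppose $\sum_k c_k\varrho(h_{\gamma_k})=\sum_{j=1}^m\big([x_j]-(\psi_j)_\ast[x_j]\big)$; only finitely many diffeomorphisms $\psi_1,\dots,\psi_m$ occur. One forms $X=M\times_{F_m}\widetilde W$ (with $\widetilde W$ the universal cover of a wedge of $m$ circles), so $\pi_1(X)=\Gamma\rtimes F_m$, and pushes the relation forward under $i_\ast$ into $K_1(C^\ast_{L,0}(\widetilde X)^{\Gamma\rtimes F_m})$. By the Pimsner--Voiculescu exact sequence, $i_\ast$ annihilates each difference $[x_j]-(\psi_j)_\ast[x_j]$, so $\sum_k c_k\, i_\ast\varrho(h_{\gamma_k})=0$. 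On the other hand, strong finite embeddability of $\Gamma$ says exactly that $\Gamma\rtimes F_m$ is finitely embeddable, so the Weinberger--Yu results apply to $\Gamma\rtimes F_m$: the classes $[p_{\gamma_k}]$ span a rank-$n$ subgroup of $K_0^{\fin}(C^\ast(\Gamma\rtimes F_m))$, and no nonzero element of the finite part lies in the image of the assembly map, whence the boundary map $\partial_{\Gamma\rtimes F_m}$ is injective on the finite part; since $\partial_{\Gamma\rtimes F_m}\big(\sum_k c_k[p_{\gamma_k}]\big)=\sum_k c_k\, i_\ast\varrho(h_{\gamma_k})$, one gets a contradiction. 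Note that both halves of the Weinberger--Yu statement are needed (the rank statement \emph{and} the non-membership in the image of assembly), whereas your outline only invokes the rank of the finite part. To repair your proposal you would need to replace your ``$\operatorname{Aut}(\Gamma)$-invariant homomorphism'' step with this semidirect-product/Pimsner--Voiculescu argument, or else supply an actual construction of such an invariant homomorphism, which is not available from order-preservation alone.
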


To prove this theorem, we need invariants under the action of the diffeomorphism group that come from Dirac operators to distinguish positive scalar curvature metrics. The index theoretic techniques used in \cite{Weinberger:2013qy}, for example,  do not produce such invariants. Our proof uses the higher rho invariant; this is a secondary invariant associated to Dirac operators and in particular depends on the choice of Riemannian metric. We show that the higher rho invariant remains unchanged in a certain $K$-theory group under the action of the diffeomorphism group, allowing us to distinguish elements in $\widetilde P(M)$. This is  the main novelty of the proof.  

The concept of strongly finite embeddability into Hilbert space for groups is a stronger version of the notion of finite embeddability into Hilbert space, which was introduced by Weinberger and Yu in \cite{Weinberger:2013qy}.
We refer the reader to \cite{Weinberger:2013qy} or Section $\ref{sec:fpk} \ \&\ \ref{sec:sfe}$ below for the precise definitions. The class of groups that are strongly finitely embeddable into Hilbert space includes all residually finite groups, amenable groups, hyperbolic groups, and virtually torsion free groups (e.g. $Out(F_n)$). By definition, strongly finite embeddability implies finite embeddability (see Section $\ref{sec:fpk} \ \&\ \ref{sec:sfe}$).   It is an open question whether the converse is true, that is, whether finite embeddability implies strongly finite embeddability.

For general groups, we prove the following weaker version of the conjecture. This result is motivated by a theorem of Piazza and Schick \cite{MR2366359}.  They used a different method to show that the moduli space $\mathcal R^+(M)/\diff(M)$ has infinitely many connected components when $\dim M = 4k+3 \geq 5$ and the fundamental group $\pi_1(M)$ is not torsion free \cite{MR2366359}. 

\begin{thmb}
Let $M$ be a closed spin manifold which carries a positive scalar curvature metric with $\dim M = 2k+1 \geq 5$. If $\Gamma = \pi_1(M)$ is not torsion free, then the rank of the abelian group $\widetilde P(M)$ is $\geq 1$.
\end{thmb}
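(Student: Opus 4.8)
The plan is to reduce Theorem B to producing a single nonzero homomorphism from $\widetilde P(M)$ to a rank-one abelian group, and to build such a homomorphism out of the higher rho invariant together with the finite part of the $K$-theory of $C^\ast_{\max}(\Gamma)$; the point will be that this homomorphism is insensitive to the $\diff(M)$-action because it only involves the \emph{canonical} trace. Since $\Gamma$ is not torsion free, fix $\gamma\in\Gamma$ with $\gamma\neq e$ and $\ord(\gamma)=d<\infty$. Then $p_\gamma=\tfrac1d\sum_{k=0}^{d-1}\gamma^k$ is a projection in $\mathbb C\Gamma\subseteq C^\ast_{\max}(\Gamma)$ and defines a class $[p_\gamma]\in K_0(C^\ast_{\max}(\Gamma))$. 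The canonical trace $\tau$ on $C^\ast_{\max}(\Gamma)$ (induced by the left regular representation, so $\tau(\sum_g a_g g)=a_e$) gives $\tau_\ast\colon K_0(C^\ast_{\max}(\Gamma))\to\mathbb R$ with $\tau_\ast[p_\gamma]=1/d\neq 0$; hence $[p_\gamma]$ has infinite order. It is essential here that $\dim M=2k+1$ is odd, since this is what places the higher rho invariant of a positive scalar curvature metric and the finite-part class $[p_\gamma]$ in the same degree ($K_0$), so that the two can be paired.

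Next I would invoke the machinery of the paper --- the higher rho invariant as a homomorphism on $P(M)$, its pairing with the finite part of $K$-theory, and the realization of finite-part classes by positive scalar curvature metrics via Gromov--Lawson surgery (this is where $\dim M\geq 5$ is used), following the strategy of Weinberger--Yu --- to produce a homomorphism $\Phi_0\colon P(M)\to\mathbb R$ assembled from the higher rho invariant, the pairing with $[p_\gamma]$, and $\tau$, together with an element $x\in P(M)$ whose associated finite-part class is $[p_\gamma]$, so that $\Phi_0(x)=\tau_\ast[p_\gamma]=1/d\neq 0$. At this stage one has already recovered $\operatorname{rank} P(M)\geq 1$, hence the Piazza--Schick statement that $\mathcal R^+(M)$ has infinitely many connected components; what remains is to descend from $P(M)$ to $\widetilde P(M)=P(M)/P_0(M)$.

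For the descent, let $\psi\in\diff(M)$. The metric $\psi^\ast g$ has Dirac operator unitarily conjugate, via $\psi$, to the Dirac operator of $g$, so the higher rho invariant of $\psi^\ast g$ is obtained from that of $g$ by the automorphism of the structure group $K_\ast(C^\ast_{L,0}(\widetilde M)^\Gamma)$ induced by $\psi$; this automorphism covers an automorphism $\phi_\psi$ of $\Gamma$ (well defined up to inner automorphisms) and, correspondingly, an automorphism $(\phi_\psi)_\ast$ of $K_0(C^\ast_{\max}(\Gamma))$. The key observation is that the canonical trace is invariant under every automorphism of $\Gamma$: for any automorphism $\phi$ of $\Gamma$, the induced automorphism of $C^\ast_{\max}(\Gamma)$ sends $\sum_g a_g g$ to $\sum_g a_g\,\phi(g)$, whose coefficient at $e$ is again $a_e$, and inner automorphisms are absorbed because $\tau$ is a trace; hence $\tau_\ast\circ(\phi_\psi)_\ast=\tau_\ast$. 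Combining these, $\Phi_0(\psi^\ast[x])=\Phi_0([x])$ for all $[x]\in P(M)$ and all $\psi\in\diff(M)$, so $\Phi_0$ vanishes on $P_0(M)$ and descends to a homomorphism $\Phi\colon\widetilde P(M)\to\mathbb R$ with $\Phi(x)\neq 0$. Therefore $\operatorname{rank}\widetilde P(M)\geq 1$.

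The main obstacle is exactly this last step --- making the $\diff(M)$-invariance of the higher rho invariant rigorous. One must keep track of the action of $\psi$ on the spin structure and on the Dirac operator under pullback, of the induced automorphism of $\widetilde M$ equivariant over an (outer) automorphism of $\Gamma$, of the resulting automorphism of $K_\ast(C^\ast_{L,0}(\widetilde M)^\Gamma)$ (well defined despite the inner ambiguity in $\phi_\psi$), and of the compatibility of all this with the pairing against $[p_\gamma]$ and with $\tau$; it is precisely to make the projections $p_\gamma$ and the trace $\tau$ transform functorially under automorphisms of $\Gamma$ that one works with the \emph{maximal} group $C^\ast$-algebra. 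Compared with Weinberger--Yu, the new phenomenon is that a secondary, metric-dependent invariant --- the higher rho invariant --- nonetheless yields a well-defined nonzero homomorphism on the coinvariant group $\widetilde P(M)$. Finally, note that no hypothesis on $\Gamma$ beyond the presence of torsion is required: unlike Theorem A, which needs $N_\fin(\Gamma)$ \emph{linearly independent} finite-part classes and uses strong finite embeddability for that independence, here a single torsion element suffices and the non-triviality of $[p_\gamma]$ is unconditional. Thus Theorem B is the ``$\operatorname{rank}\geq 1$'' specialization of the proof of Theorem A.
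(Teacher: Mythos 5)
There is a genuine gap, and it sits exactly where the paper says the difficulty lies. Your plan is to manufacture a number out of the higher rho invariant by ``pairing it with $[p_\gamma]$ and the canonical trace $\tau$'', but no such pairing exists in the paper or in your argument: the rho invariant lives in $K_1(C^\ast_{L,0}(\widetilde M)^\Gamma)$, while $\tau$ only pairs with $K_0(C^\ast(\Gamma))$. Numerical pairings of rho classes with traces are delocalized eta--type invariants, which require traces supported on nontrivial conjugacy classes, smooth subalgebras and convergence hypotheses (and are problematic for the maximal completion); the canonical trace at $e$ detects nothing of this kind. If, on the other hand, you mean $\Phi_0=\tau_\ast\circ\ind_{rel}$ (trace of the relative higher index), then the nonvanishing step is fine but the descent to $\widetilde P(M)$ is not: since $\ind_{rel}$ is taken relative to the fixed base metric $g_M$, one has $\ind_{rel}(\psi^\ast h)=\ind_\Gamma(g_M,\psi^\ast g_M)+\psi_\ast\ind_{rel}(h)$, and the error term $\tau_\ast\big(\ind_\Gamma(g_M,\psi^\ast g_M)\big)$ is exactly the affine/base-point obstruction the paper warns about when it explains why $\ind_{rel}$ is not known to descend to the coinvariants; your trace-invariance-under-automorphisms observation does not dispose of it. Moreover, your key numerical input is too weak even for the primary step the argument ultimately needs: $\tau_\ast[p_\gamma]=1/d$ only shows $[p_\gamma]$ has infinite order in $K_0(C^\ast(\Gamma))$, whereas what makes $\varrho(h_\gamma)=\partial[p_\gamma]$ nonzero is that no nonzero multiple of $[p_\gamma]$ lies in the image of the assembly map. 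The canonical trace cannot prove this: by Atiyah-type $L^2$-index arguments, classes in the image of the free assembly map can have arbitrary nonzero trace, so nonvanishing of $\tau_\ast$ is not an obstruction to being in the image.

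The paper's actual proof of Theorem B contains none of this trace machinery: it runs the proof of Theorem A verbatim with a single torsion element $\gamma$. One assumes a relation $c\,\varrho(h_\gamma)=\sum_j\big([x_j]-(\psi_j)_\ast[x_j]\big)$ in $K_1(C^\ast_{L,0}(\widetilde M)^\Gamma)$, forms $\Gamma\rtimes F_m$ from the finitely many diffeomorphisms appearing and the space $X=M\times_{F_m}\widetilde W$, pushes forward under $i_\ast$ (the right-hand side dies by the Pimsner--Voiculescu sequences), and then contradicts the injectivity of $\partial_{\Gamma\rtimes F_m}$ on multiples of $[p_\gamma]$. The only place Theorem A used strong finite embeddability was to guarantee that nonzero elements of the finite part of $K_0(C^\ast(\Gamma\rtimes F_m))$ are not in the image of the assembly map; for Theorem B this is replaced by the unconditional Weinberger--Yu result (their Theorem 2.3), valid for \emph{every} non-torsion-free group, applied to the auxiliary group $\Gamma\rtimes F_m$ (which inherits torsion from $\Gamma$). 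Your closing remark that Theorem B is the ``rank $\geq 1$'' specialization of Theorem A is the right slogan, but your proposed mechanism for handling the $\diff(M)$-action --- a $\diff(M)$-invariant numerical functional built from $\tau$ --- is not constructed and, as outlined, cannot replace the semidirect-product/Pimsner--Voiculescu argument that the paper uses for precisely this purpose.
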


The paper is organized as follows. In Section $\ref{sec:pre}$, we recall the definition of the index map, the local index map and the higher rho invariant. In Section $\ref{sec:fpk}$, we discuss the finite part of $K$-theory of group $C^\ast$-algebras. In Section $\ref{sec:sfe}$, we introduce the notion of strongly finite embeddability into Hilbert space for groups. We prove the main results of the paper in Section $\ref{sec:psc}$.

\section{Preliminaries}\label{sec:pre}
In this section, we briefly recall some standard definitions. We refer the reader to \cite{MR2431253, MR1147350, MR1451759} for more details.

\subsection{Maximal Roe algebras and localization algebras}
Let $X$ be a proper metric space. That is, every closed ball in $X$ is compact. An	$X$-module is a separable Hilbert space equipped with a	$\ast$-representation of $C_0(X)$, the algebra of all continuous functions on $X$ which vanish at infinity. An	$X$-module is called nondegenerate if the $\ast$-representation of $C_0(X)$ is nondegenerate. An $X$-module is said to be ample if no nonzero function in $C_0(X)$ acts as a compact operator. Throughout the paper, we only work with ample $X$-modules.  
\begin{definition}
Let $H_X$ be a $X$-module and $T$ be a bounded linear operator acting on $H_X$. 
\begin{enumerate}[(i)]
\item The propagation of $T$ is defined to be $\sup\{ d(x, y)\mid (x, y)\in \supp(T)\}$, where $\supp(T)$ is  the complement (in $X\times X$) of the set of  points $(x, y)\in X\times X$ for which there exist $f, g\in C_0(X)$ such that $gTf= 0$ and $f(x)\neq 0$, $g(y) \neq 0$;
\item $T$ is said to be locally compact if $fT$ and $Tf$ are compact for all $f\in C_0(X)$; 
\end{enumerate}
\end{definition}

\begin{definition}
Let $H_X$ be a nondegenerate ample $X$-module and $\mathcal B(H_X)$ the set of all bounded linear operators on $H_X$.  The (reduced) Roe algebra of $X$, denoted by $C_{r}^\ast(X)$, is the $C^\ast$-algebra generated by all locally compact operators with finite propagations in $\mathcal B(H_X)$.

\end{definition}

In this paper, we will mainly work with the maximal Roe algebra \cite{MR2431253}. Let us denote by $\mathbb C[X]$ the $\ast$-algebra of  all locally compact operators with finite propagations in $\mathcal B(H_X)$. 
\begin{lemma}[{\cite[Lemma 3.4]{MR2431253}}]
For each $a\in \mathbb C[X]$, there is a constant $c_a$ such that 
\[  \|\phi(a) \| \leq c_a\]
for all $\ast$-representation $\phi: \mathbb C[X] \to \mathcal B(H)$, where $H$ is a Hilbert space. 
\end{lemma}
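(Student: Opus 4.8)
The plan is to produce a bound of the form $c_a=N\|a\|$, where $\|a\|$ is the operator norm of $a$ on the $X$-module $H_X$ and $N$ is a purely combinatorial quantity depending on the propagation of $a$ and the coarse geometry of $X$ but not on $\phi$; the mechanism is that a $\ast$-homomorphism out of an honest $C^\ast$-algebra is automatically contractive, so the task is to write $a$ as a short sum of elements each lying in a $C^\ast$-subalgebra of $\mathbb C[X]$. First I would decompose. Let $R$ be the propagation of $a$. Since $X$ is proper it is separable, so fix a countable partition $X=\bigsqcup_{i\in I}X_i$ into bounded Borel sets of diameter $\le R$; each $\chi_{X_i}$ acts on $H_X$ as a projection and $\sum_i\chi_{X_i}=1$ strongly. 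Writing $N_R(X_i)=\{x\in X:d(x,X_i)\le R\}$ (relatively compact, since $X$ is proper), the element $B_i:=\chi_{X_i}a$ satisfies $B_i=\chi_{X_i}a\,\chi_{N_R(X_i)}$ because $a$ has propagation $\le R$; thus $B_i$ is a locally compact operator supported in the bounded set $N_R(X_i)$, with $\|B_i\|\le\|a\|$, and $a=\sum_i B_i$ in the strong topology. Next I form the graph on $I$ with $i\sim i'$ iff $i\ne i'$ and $d(X_i,X_{i'})\le 5R$: properness makes it locally finite, and for the spaces relevant to this paper (cocompact covering spaces, hence of bounded geometry) its degree is uniformly bounded, say by $\Delta$. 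Colour $I$ with $N:=\Delta+1$ colours, $I=I_1\sqcup\cdots\sqcup I_N$, and set $a_c:=\sum_{i\in I_c}B_i$. For distinct $i,i'$ in one colour class, $d(X_i,X_{i'})>5R$ forces $N_R(X_i)\cap N_R(X_{i'})=\emptyset$, so $a_c$ is block-diagonal with blocks $B_i$, and $a=\sum_{c=1}^N a_c$ is a \emph{finite} sum of such operators.

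The key step is that each $a_c$ lies in a genuine $C^\ast$-subalgebra of $\mathbb C[X]$. Indeed, the block-diagonal operators whose $i$-th block ($i\in I_c$) lies in $\mathcal K(\chi_{N_R(X_i)}H_X)$ form a copy of the $C^\ast$-algebra $\bigoplus^{\ell^\infty}_{i\in I_c}\mathcal K(\chi_{N_R(X_i)}H_X)$; this sits inside $\mathbb C[X]$ — finite propagation is clear, and local compactness holds because $\|f|_{N_R(X_i)}\|_\infty\to 0$ for every $f\in C_0(X)$ — and the operator norm it inherits from $\mathcal B(H_X)$ coincides with its $C^\ast$-norm $\sup_i\|\,\cdot\,\|$. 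Since $a_c$ belongs to this $C^\ast$-subalgebra with norm $\sup_{i\in I_c}\|B_i\|\le\|a\|$, and $\phi$ restricts there to a $\ast$-homomorphism of $C^\ast$-algebras, we get $\|\phi(a_c)\|\le\|a\|$ for \emph{every} $\ast$-representation $\phi$. Summing over the $N$ colours yields $\|\phi(a)\|\le\sum_{c=1}^N\|\phi(a_c)\|\le N\|a\|$, so $c_a:=N\|a\|$ works uniformly in $\phi$.

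I expect the only genuine obstacle to be controlling $N$, i.e.\ colouring the $5R$-relation graph with finitely many colours: in general the degrees of this graph are finite (by properness) but need not be uniformly bounded, so for an arbitrary proper metric space one would have to set up the combinatorics more carefully (or build $c_a$ differently); in the situation at hand $X$ is a cocompact covering space and has bounded geometry, so a uniform bound $\Delta$ is automatic. The remaining points — that $\chi_{X_i}a$, the $a_c$, and finite sums of them are locally compact with finite propagation, hence honest elements of $\mathbb C[X]$, and that the strong sum $\sum_i B_i$ regroups unconditionally (the ranges $\chi_{X_i}H_X$ being mutually orthogonal) — are routine, using that each $X_i$ and each $N_R(X_i)$ is relatively compact.
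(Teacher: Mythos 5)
The paper itself gives no argument for this lemma --- it defers to Gong--Wang--Yu \cite{MR2431253} --- and your proof runs along essentially the same lines as the cited one: cut a locally compact operator of finite propagation into a controlled number of pieces, each lying in a block-diagonal $C^\ast$-subalgebra of $\mathbb C[X]$ (an $\ell^\infty$-direct sum of compact operators on uniformly bounded blocks), use that a $\ast$-homomorphism defined on an honest $C^\ast$-algebra is automatically contractive, and sum. The core computations are correct: $\chi_{X_i}a=\chi_{X_i}a\chi_{N_R(X_i)}$ by the propagation bound, the blocks are compact by properness plus local compactness, $\|a_c\|=\sup_{i}\|B_i\|\le\|a\|$, and hence $\|\phi(a_c)\|\le\|a_c\|$ for every $\ast$-representation $\phi$.

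Two points need tightening. First, for $\bigoplus^{\ell^\infty}_{i\in I_c}\mathcal K\big(\chi_{N_R(X_i)}H_X\big)$ to sit inside $\mathbb C[X]$ you need the family $\{N_R(X_i)\}_{i\in I_c}$ to be locally finite (only finitely many pieces meeting any compact set); an arbitrary countable Borel partition into sets of diameter $\le R$ need not have this property. Either build the partition from a maximal $R/2$-separated subset (properness then gives local finiteness), or avoid the issue by replacing the $\ell^\infty$-sum with the $C^\ast$-algebra generated by $a_c$ in $\mathcal B(H_X)$: its elements are norm limits of block-diagonal elements of $\mathbb C[X]$ with propagation $\le 3R$, hence again lie in $\mathbb C[X]$. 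Second, and more substantively, the finite colouring is exactly where bounded geometry enters: properness only makes your $5R$-graph locally finite, and a locally finite graph can have infinite chromatic number, so as written the argument does not cover an arbitrary proper metric space, which is the lemma's stated standing hypothesis. You flag this yourself, and it is harmless for this paper: every space to which the lemma is applied admits a proper cocompact isometric action (universal covers of closed manifolds), which forces bounded geometry, and this is also the setting of the cited reference. So the proposal is correct in the generality actually used, by essentially the route of the source the paper cites; just state the bounded-geometry (or cocompact-action) hypothesis explicitly.
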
 
We refer the reader to \cite[Lemma 3.4]{MR2431253} for a detailed proof of the above lemma.
\begin{definition}
 For each $a\in \mathbb C[X]$, we define 
\[  \|a\|_{\max} = \sup_{\phi} \ \big\{\|\phi(a)\| \mid \phi: \mathbb C[X] \to \mathcal B(H) \ \textup{a $\ast$-representation} \big\}, \]
where $\phi$ runs through all $\ast$-representations of $\mathbb C[X]$.
The maximal Roe algebra $C^\ast_{\max} (X)$ is defined to be the completion of $\mathbb C[X]$ with respect to this maximal norm. 
\end{definition}

For notational simplicity, we write $C^\ast(X) = C^\ast_{\max}(X) $ from now on. Let us also recall the definition of localization algebras \cite{MR1451759}. Again, we work with the maximal version. 
\begin{definition}

\begin{enumerate}[(i)]
\item $C_L^\ast(X)$ is the $C^\ast$-algebra generated by all bounded and uniformly continuous functions $f: [0, \infty) \to C^\ast(X)$  such that 
\[ \textup{propagation of $f(t) \to 0 $, as $t\to \infty$.}  \]
\item $C_{L, 0}^\ast(X)$ is the kernel of the evaluation map 
\[  \ev : C_L^\ast(X) \to C^\ast(X),  \quad   \ev (f) = f(0).\]
In particular, $C_{L, 0}^\ast(X)$ is a closed  ideal of $C_L^\ast(X)$.
\end{enumerate}
\end{definition}

Now suppose there is a countable discrete group $\Gamma$ acting on $X$ properly and cocompactly. Let $H_X$ be an $X$-module equipped with a covariant unitary representation of $\Gamma$. If we denote the representation of $C_0(X)$ by $\varphi$ and the representation of $\Gamma$ by $\pi$, this means 
\[  \pi(\gamma) (\varphi(f) v )  =  \varphi(f^\gamma) (\pi(\gamma) v),\] 
where $f\in C_0(X)$, $\gamma\in \Gamma$, $v\in H_X$ and $f^\gamma (x) = f (\gamma^{-1} x)$. In this case, we call $(H_X, \Gamma, \varphi)$ a covariant system. 

\begin{definition}[\cite{MR2732068}]
A covariant system $(H_X, \Gamma, \varphi)$ is called admissible if 
\begin{enumerate}[(1)]
\item the $\Gamma$-action on $X$ is proper and cocompact;
\item $H_X$ is a nondegenerate ample $X$-module;
\item for each $x\in X$, the stabilizer group $\Gamma_x$ acts on $H_X$ regularly in the sense that the action is isomorphic to the action of $\Gamma_x$ on $l^2(\Gamma_x)\otimes H$ for some infinite dimensional Hilbert space $H$. Here $\Gamma_x$ acts on $l^2(\Gamma_x)$ by translations and acts on $H$ trivially. 
\end{enumerate}
\end{definition}
We remark that for each proper metric space $X$ with a proper and cocompact isometric action of $\Gamma$, there exists an admissible covariant system $(H_X, \Gamma, \varphi)$. Moreover,  if $\Gamma$ acts freely on $X$, then the condition $(3)$ is automatically satisfied.  If no confusion arises, we will simply denote an admissible covariant system $(H_X, \Gamma, \varphi)$ by $H_X$ and call it an admissible $(X, \Gamma)$-module.

\begin{definition}
Let $X$ be a locally compact metric space $X$ with a proper and cocompact isometric action of $\Gamma$. Suppose $H_X$ is an admissible $(X, \Gamma)$-module. We denote by $\mathbb C[X]^\Gamma$ the $\ast$-algebra of all $\Gamma$-invariant locally compact operators with finite propagations in $\mathcal B(H_{X})$. We define the maximal $\Gamma$-invariant Roe algebra $C^\ast(X)^\Gamma = C^\ast_{\max}(X)^\Gamma$ to be the completion of $\mathbb C[X]^\Gamma$ under the maximal norm:
\[  \|a\|_{\max} = \sup_{\phi} \ \big\{\|\phi(a)\| \mid \phi: \mathbb C[X]^\Gamma \to \mathcal B(H) \ \textup{a $\ast$-representation} \big\}. \]
We can also define the maximal $C_L^\ast(X)^\Gamma$ and $C_{L,0}^\ast(X)^\Gamma$  similarly. 
\end{definition}

\begin{remark}
In fact,  $\mathbb C[X]^\Gamma$ is isomorphic to $\mathcal K[\Gamma]$ the group algebra over the coefficient ring $\mathcal K$, where $\mathcal K$ is the algebra of all compact operators. In particular, $C_{\max}^\ast(X)^\Gamma \cong C_{\max}^\ast(\Gamma) \otimes \mathcal K$, where $C_{\max}^\ast(\Gamma)$ is the maximal group $C^\ast$-algebra of $\Gamma$.  
\end{remark}

\subsection{K-homology}
In this subsection, we recall the definition of K-homology due to Kasparov. 
Let $X$ be a locally compact metric space with a proper and cocompact isometric action of $\Gamma$. The $K$-homology groups $K_j^\Gamma(X)$, $j=0, 1$, are generated by the following cycles modulo certain equivalence relations (cf. \cite{GK88}):
\begin{enumerate}[(i)]
\item an even cycle for $K_0^\Gamma(X)$ is a pair $(H_X, F)$, where $H_X$ is an admissible $(X, \Gamma)$-module and $F\in \mathcal B(H_X)$ such that $F$ is $\Gamma$-equivariant,     $F^\ast F - I$ and $F F^\ast - I$ are locally compact and $[F, f] = F f - fF $ is compact for all $f\in C_0(X)$.   
\item an odd cycle for $K_1^\Gamma(X)$ is a pair $(H_X, F)$, where $H_X$ is an admissible  $(X, \Gamma)$-module and $F$ is a $\Gamma$-equivariant self-adjoint operator in $\mathcal B(H_X)$ such that $F^2 - I$ is locally compact and $[F, f]$ is compact for all $f\in C_0(X)$. 
\end{enumerate}

%\begin{remark}\label{rm:ad2}
%In fact, we get the same group even if  we allow cycles $(H_X, F)$ with possibly non-admissible $H_X$ in the above definition. Indeed,  we can take the direct sum $H_X$ with an admissible $(X, \Gamma)$-module $\mathcal H$ and define $F' = F \oplus 1$. It is easy to see that $(H_X\oplus \mathcal H, F')$ is equivalent to $(H_X, F)$ in $K_0^\Gamma(X)$.  So from now on, we assume $H_X$ is an admissible $(X, \Gamma)$-module.  
%\end{remark}

\begin{remark}
In the general case where the action of $\Gamma$ on $X$ is not necessarily cocompact, we define 
\[  K_i^\Gamma(X)  = \varinjlim_{Y\subseteq X}  K_i^\Gamma(Y) \]
where $Y$ runs through all closed $\Gamma$-invariant subsets of $X$ such that $Y/\Gamma$ is compact. 
\end{remark}

\subsection{K-theory and index maps}\label{sec:kt}
In this subsection, we recall the standard construction of the index maps in $K$-theory of $C^\ast$-algebras. 
For a short exact sequence of $C^\ast$-algebras $ 0 \to  \mathcal J \to \mathcal A \to \mathcal A/\mathcal J \to 0,$
we have a six-term exact sequence in $K$-theory:
\[ 
\xymatrix { K_0( \mathcal J ) \ar[r] & K_0(\mathcal A) \ar[r] & K_0(\mathcal A/\mathcal J )  \ar[d]^{\partial_1} \\
K_1(\mathcal A/\mathcal J) \ar[u]^{\partial_0} 
  & K_1(\mathcal A) \ar[l] & K_1(\mathcal J) \ar[l]
}
\]  
Let us recall the definition of the index maps $\partial_i$.

\begin{enumerate}[(1)]
\item Even case. Let $u$ be an invertible element in $\mathcal A/\mathcal J$. Let $v$ be the inverse of $u$ in $\mathcal A/\mathcal J$. Now suppose $U, V\in \mathcal A$ are lifts of $u$ and $v$. We define 
\[ W = \begin{pmatrix} 1 & U\\ 0 & 1\end{pmatrix} \begin{pmatrix} 1 & 0 \\ -V & 1\end{pmatrix} \begin{pmatrix} 1 & U  \\ 0 & 1 \end{pmatrix}\begin{pmatrix} 0 & -1\\ 1 & 0 \end{pmatrix}. \]
Notice that $W$ is invertible and a direct computation shows that 
\[  W - \begin{pmatrix} U & 0 \\ 0 & V \end{pmatrix} \in   \mathcal J.\]
Consider the idempotent
\begin{equation}\label{eq:keven}
 P = W \begin{pmatrix} 1 & 0 \\ 0 & 0\end{pmatrix} W^{-1} = \begin{pmatrix} UV + UV(1-UV) & (2 + UV)(1-UV) U \\ V(1-UV) & (1-UV)^2\end{pmatrix}.
\end{equation}
We have  
\[ P - \begin{pmatrix} 1 & 0 \\0 & 0\end{pmatrix} \in \mathcal J. \]
By definition, 
\[  \partial_0 ([u]) := [P] - \left[\begin{pmatrix} 1 & 0 \\0 & 0\end{pmatrix} \right] \in K_0(\mathcal J).\]
\item Odd case. Let $q $ be an idempotent in $\mathcal A/\mathcal J$ and $Q$ a lift of $q$ in $\mathcal A$. Then 
\[  \partial_1([q])  := [e^{2\pi iQ}] \in K_1(\mathcal J). \] 

\end{enumerate}

\subsection{Higher index map and local index map}\label{sec:localind}
In this subsection, we recall the construction of the higher index  map and the local index map \cite{MR1451759, MR2732068}. 

Let $(H_X, F)$ be an even cycle for $K_0^\Gamma(X)$. Choose  a $\Gamma$-invariant locally finite open cover $\{U_i\}$ of $X$ with diameter$(U_i) < c$ for some fixed $c > 0$. Let $\{\phi_i\} $ be a $\Gamma$-invariant continuous partition of unity subordinate to $\{U_i\}$. We define
\[  \mathcal F = \sum_{i} \phi^{1/2}_i F \phi^{1/2}_i,\]
where the sum converges in strong operator norm topology. It is not difficult to see that $(H_X, \mathcal F)$ is equivalent to $(H_X, F)$ in $K^\Gamma_0(X)$. By using the fact that $\mathcal F$ has finite propagation, we see that $\mathcal F$ is a multiplier of $C^\ast(X)^\Gamma$ and, in fact, is a unitary modulo $C^\ast(X)^\Gamma$. Consider the short exact sequence of $C^\ast$-algebras 
\[ 0 \to C^\ast(X)^\Gamma \to \mathcal M(C^\ast(X)^\Gamma) \to \mathcal M(C^\ast(X)^\Gamma) /C^\ast(X)^\Gamma \to 0 \]
where $\mathcal M(C^\ast(X)^\Gamma)$ is the multiplier algebra of $C^\ast(X)^\Gamma$.
By the construction in Section $\ref{sec:kt}$ above, $\mathcal F$ produces a class $[\mathcal F] \in K_0(C^\ast(X)^\Gamma)$. We define the higher index of $(H_X, F)$ to be $[\mathcal F]$. From now on, we denote $[\mathcal F]$ by  $\ind(H_X, F)$ or simply $\ind(F)$, if no confusion arises.

To define the local index class of $(H_X, F)$, we need to use a family of partitions of unity. More precisely, for each $n\in \mathbb N$, let  $\{U_{n, j}\}$ be a $\Gamma$-invariant locally finite open cover of $X$ with diameter $(U_{n,j}) < 1/n$ and $\{\phi_{n, j}\}$ be a $\Gamma$-invariant continuous partition of unity subordinate to $\{U_{n, j}\}$. We define 
\begin{equation}\label{eq:path}
 \mathcal F(t) = \sum_{j} (1 - (t-n)) \phi_{n, j}^{1/2} \mathcal F \phi_{n,j}^{1/2} + (t-n) \phi_{n+1, j}^{1/2} \mathcal F \phi_{n+1, j}^{1/2}
\end{equation}
for $t\in [n, n+1]$. 

Then $\mathcal F(t), 0\leq t <\infty,$ is a multiplier of $C^\ast_L(X)^\Gamma$ and a unitary modulo $C^\ast_L(X)^\Gamma$. Hence by the construction in Section $\ref{sec:kt}$ above, $\mathcal F(t),  0\leq t <\infty,$ gives rise to an element in $K_0(C^\ast_L(X)^\Gamma)$. We call this class $[\mathcal F(t)]\in K_0(C^\ast_L(X)^\Gamma)$  the local index of $(H_X, F)$. If no confusion arises, we denote this local index class by $\ind_L(H_X, F)$ or simply $\ind_L(F)$.

Now let $(H_X, F)$ be an odd cycle in $K_1^\Gamma(X)$. With the same notation from above, we set $q = \frac{\mathcal F + 1}{2}$. Then the index class of $(H_X, F)$ is defined to be $ [e^{2\pi i q}]\in K_1(C^\ast(X)^\Gamma).$ For the local index class of $(H_X, F)$, we use $q(t) = \frac{\mathcal F(t) + 1}{2}$ in place of $q$.

\subsection{Higher rho invariant}\label{sec:rho}
In this subsection, we define the higher rho invariant \cite[Definition 7.1]{MR2761858}. We carry out the construction in the odd dimensional case. The even dimensional case is similar.

Suppose $X$ is an odd dimensional complete spin manifold without boundary and we fix a spin structure on $X$. Assume that there is a discrete group $\Gamma$ acting on $M$  properly and cocompactly by isometries. In addition, we assume the action of $\Gamma$ preserves the spin structure on $X$. A typical such example comes from a Galois cover $\widetilde M$ of a closed spin manifold $M$ with $\Gamma$ being the group of deck transformations.

Let $S$ be the spinor bundle over $X$ and $D= D_{X}$ be the associated Dirac operator on $X$.  Let $H_X = L^2(X, S) $ and  
\[  F = D( D^2 + 1)^{-1/2}.  \]
Then $(H_X, F)$ defines a class in $K_1^\Gamma(X)$. Note that $F$ lies in the multiplier algebra of $C^\ast(X)^\Gamma$, since $F$ can be approximated by elements of finite propagation in the multiplier algebra of $C^\ast(X)^\Gamma$. As a result, we can directly work with\footnote{In other words, there is no need to pass to the operator $\mathcal F$ or $\mathcal F(t)$ as in the general case.} 
\begin{equation}\label{eq:path2}
 F(t) = \sum_{j} (1 - (t-n)) \phi_{n, j}^{1/2} F \phi_{n,j}^{1/2} + (t-n) \phi_{n+1, j}^{1/2} F \phi_{n+1, j}^{1/2}
\end{equation}
for $t\in [n, n+1]$. And the same argument as before defines the index class and the local index class of $(H_X, F)$. We shall denote them by $\ind(D)\in K_1(C^\ast(X)^\Gamma)$ and $\ind_L(D)\in K_1(C_L^\ast(X)^\Gamma)$ respectively.

Now suppose in addition $X$ is endowed with a complete Riemannian metric $g$ whose scalar curvature $\kappa$ is positive everywhere, then the associated Dirac operator in fact naturally defines a class in $K_1(C_{L, 0}^\ast(X)^\Gamma)$. Indeed, recall that
\[ D^2 = \nabla^\ast \nabla + \frac{\kappa}{4}, \]
where $\nabla: C^\infty(X, S) \to C^\infty(X, T^\ast X\otimes S)$ is the associated connection and $\nabla^\ast$ is the adjoint of $\nabla$. If $\kappa >0$, then it follows immediately that $ D$ is invertible. So, instead of $D(D^2+1)^{-1/2}$, we can use 
\[  F := D|D|^{-1}. \]
Note that $\frac{F+1 }{2}$ is a genuine projection. Define $F(t)$ as in formula $\eqref{eq:path2}$, and define $q(t) := \frac{F(t) + 1}{2}$. By the construction in Section $\ref{sec:kt}$,  we form the path of unitaries $u(t) = e^{2\pi i q(t)}, 0\leq t < \infty$, in $ (C_L^\ast(X)^\Gamma)^+$. Notice that $u(0) = 1$. So  this path $u(t), 0\leq t < \infty,$ in fact lies in $(C_{L,0}^\ast(X)^\Gamma)^+$, therefore defines a class in $K_1(C_{L, 0}^\ast(X)^\Gamma)$. 

Let us now define the higher rho invariant. It was first introduced by Higson and Roe \cite[Definition 7.1]{MR2761858}. Our formulation is slightly different from that of Higson and Roe. The equivalence of the two definitions was shown in \cite[Section 6]{Xie2014823}. 
\begin{definition}
The higher rho invariant $\rho(D, g)$  of the pair $(D, g)$ is defined to be the $K$-theory class $[u(t)]\in K_1(C_{L, 0}^\ast(X)^\Gamma).$
\end{definition}
The even dimensional case is similar, where one needs to work with the natural $\mathbb Z/2\mathbb Z$-grading on the spinor bundle.

\section{Finite part of $K$-theory for group $C^\ast$-algebras}\label{sec:fpk}
In this section, we discuss the finite part of $K$-theory for group $C^\ast$-algebras. We also review some results from the paper of Weinberger and Yu \cite{Weinberger:2013qy}.

Let $\Gamma$ be a countable discrete group. Recall that an element $\gamma\in \Gamma$ is said to have finite order $d$ if $d$ is the smallest positive integer such that $\gamma^d = e$, where $e$ is the identity element of $\Gamma$. In this case, we write $\ord(\gamma) = d$. 

Let $\gamma\in \Gamma$ be an element of finite order $d$.  We define
\[ p_\gamma = \frac{1}{d}\sum_{k=1}^d \gamma^k. \] 
We observe that $p_\gamma$ is an idempotent of the group algebra $\mathbb C[\Gamma]$. Recall that $C^\ast(\Gamma)$ is the maximal group $C^\ast$-algebra of $\Gamma$. We define  $K_0^\fin(C^\ast(\Gamma))$ to be the abelian subgroup of $K_0(C^\ast(\Gamma))$ generated by $[p_\gamma]$ for all finite order elements $\gamma\neq e$ in $\Gamma$.  We call $K_0^\fin(C^\ast(\Gamma))$ the finite part of $K_0(C^\ast(\Gamma))$.

\begin{definition} Let $\{\gamma_1, \cdots, \gamma_n\}$ be a collection of nontrivial elements (i.e. $\gamma_i\neq e$) with finite order in $\Gamma$.
We define $\mathcal M_{\gamma_1, \cdots, \gamma_n}$ to be the abelian subgroup of $K_0^\fin(C^\ast(\Gamma))$ generated by $\{ [p_{\gamma_1}], \cdots, [p_{\gamma_n}]\}$.
\end{definition}

Weinberger and Yu made the following conjecture in \cite{Weinberger:2013qy}. The conjecture gives a lower bound of the rank of the abelian group $K^\fin_0(C^\ast(\Gamma)$. 

\begin{conjecture}[\cite{Weinberger:2013qy}] \label{wyconj}
Suppose $\{\gamma_1, \cdots, \gamma_n\}$ is a collection of elements in $\Gamma$ with distinct finite orders and $\gamma_i\neq e$ for all $1\leq i\leq n$. Then 
\begin{enumerate}[(1)]
\item  the abelian group $\mathcal M_{\gamma_1, \cdots, \gamma_n}$ has rank $n$,
\item any nonzero element in $\mathcal M_{\gamma_1, \cdots, \gamma_n}$ is not in the image of the assembly map 
\[ \mu: K_0^\Gamma(E\Gamma) \to K_0(C^\ast(\Gamma)), \]
where $E\Gamma$ is the universal space for proper and free $\Gamma$-action.
\end{enumerate}
\end{conjecture}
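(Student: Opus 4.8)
The engine for both parts is the canonical tracial state. On $C^\ast(\Gamma)=C^\ast_{\max}(\Gamma)$ let $\tau$ be the trace determined on $\mathbb C[\Gamma]$ by $\tau(\gamma)=1$ for $\gamma=e$ and $\tau(\gamma)=0$ otherwise. For $\gamma\neq e$ of order $d$ one computes directly that $\tau(p_\gamma)=\tfrac1d$, since among $\gamma,\gamma^2,\dots,\gamma^d$ only $\gamma^d$ equals $e$. This already gives the case $n=1$ of part (1): if $a[p_\gamma]=0$ then $a/d=\tau(a[p_\gamma])=0$, so $[p_\gamma]$ has infinite order. It also handles part (2) ``generically'': by Atiyah's $L^2$-index theorem the trace $\tau$ is integer-valued on the image of $\mu$ (the $L^2$-index of a cycle pulled back from $E\Gamma$ equals its ordinary index), so a class $a[p_\gamma]$ with $d\nmid a$, having $\tau(a[p_\gamma])=a/d\notin\mathbb Z$, cannot lie in the image of $\mu$.

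The difficulty is that a single trace sees only a single rational number, whereas the orders $d_1<\dots<d_n$ produce $\mathbb Q$-linearly dependent values $1/d_i$. The plan is to enlarge the supply of traces using finite-dimensional unitary representations: a representation $\pi\colon\Gamma\to U(N)$ gives a trace $\tau_\pi=\tfrac1N\operatorname{tr}\circ\pi$, with $\tau_\pi(p_\gamma)$ equal to the normalized dimension of the $\pi(\gamma)$-fixed subspace. For a residually finite $\Gamma$ one obtains all the relevant traces this way: choosing a finite quotient $q\colon\Gamma\to G$ in which every $\gamma_i$ retains its order (intersect finitely many finite-index normal subgroups), the class $[p_{\gamma_i}]$ maps to the class of the permutation representation $\mathbb C[G/\langle q(\gamma_i)\rangle]$ in $R(G)=K_0(C^\ast(G))$. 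Part (1) then reduces to the $\mathbb Q$-linear independence of these permutation modules, which one verifies by examining the fixed-point matrix $\bigl(\dim\rho^{\langle q(\gamma_i)\rangle}\bigr)$ as $\rho$ ranges over irreducibles of $G$ (after possibly enlarging $G$); the distinctness of the orders $d_i$ — already visible in the distinct dimensions $|G|/d_i$ — is what drives the rank computation. For part (2) one uses that each $\tau_\pi$ with $\pi$ factoring through a finite quotient $G$ is also integer-valued on the image of $\mu$, by naturality of the assembly map under $\Gamma\to G$ together with the elementary finite-group case; a nonzero combination $\sum_i a_i[p_{\gamma_i}]$ lying in the image of $\mu$ would then be forced by this whole family of integrality constraints to be zero, a contradiction. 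The same scheme, with the appropriate supply of finite-quotient or finite-dimensional representations, covers amenable and hyperbolic groups.

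The main obstacle — and the reason this remains a conjecture — is precisely the input ``enough finite-dimensional representations'': an arbitrary countable group need not admit any nontrivial finite-dimensional unitary representation even when it contains torsion, so the trace-matrix argument has nothing to run on. Overcoming this is exactly the purpose of the hypotheses in Section~\ref{sec:sfe}: strong finite embeddability into Hilbert space produces, for each finite-order $\gamma$, an ``almost representation'' of $\Gamma$ that is finite-dimensional enough to define an approximate trace separating $[p_\gamma]$ from the image of the assembly map, and this is what lets one establish the conjecture for strongly finitely embeddable groups and thereby feed it into Theorems A and B.
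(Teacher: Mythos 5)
The statement you were asked about is stated in the paper as a \emph{conjecture}: the paper offers no proof of it for general $\Gamma$, and only invokes the theorem of Weinberger--Yu that it holds for groups finitely embeddable into Hilbert space (this known case, not the conjecture itself, is what feeds into Theorems A and B; note also that it is \emph{finite} embeddability, not strong finite embeddability, that is needed for the conjecture --- strong finite embeddability enters the paper only to handle the semidirect products $\Gamma\rtimes F_n$). Your write-up, by its own admission in the last paragraph, also does not prove the statement for general $\Gamma$, so as a proof attempt it is incomplete from the outset; what it offers is a sketch of the residually finite case plus a heuristic for the finitely embeddable case.

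Even within that restricted scope there is a genuine gap in your mechanism for part (2). Integrality of the canonical trace (Atiyah's $L^2$-index theorem) and of the traces $\tau_\pi$ coming from finite quotients cannot exclude from the image of $\mu$ those nonzero finite-part classes whose traces happen to be integers --- for instance $d\,[p_\gamma]$ has $\tau$-value $1$, so ``the whole family of integrality constraints'' does not force such a combination to vanish, contrary to what you assert. The actual argument in the residually finite case is representation-theoretic rather than an integrality argument: after passing to a finite quotient $q\colon\Gamma\to G$ preserving the orders of the $\gamma_i$, one shows that the image of the assembly map for \emph{free} actions lands (rationally) in the span of the regular representation of $G$, while the permutation modules $\mathbb C[G/\langle q(\gamma_i)\rangle]$ are linearly independent modulo the regular representation; distinctness of the orders $d_i$ is used there, and ``distinct dimensions $|G|/d_i$'' alone does not give this independence, which you leave unproved. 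Finally, the claim that (strong) finite embeddability yields ``approximate traces'' separating $[p_\gamma]$ from the image of $\mu$ is asserted without any construction, and it is not how the cited proof proceeds: Weinberger--Yu pass through coarsely embeddable target groups and use the strong Novikov conjecture for such groups (via localization-algebra techniques), not approximate traces. So the key steps --- part (2) beyond the non-integer-trace case, the rank-$n$ independence in part (1), and the entire finitely embeddable case --- are missing or rest on a mechanism that does not work as stated.
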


They also proved that the conjecture holds for a large class of groups, including all residually finite groups, amenable groups, Gromov's monster groups, virtually torsion free groups (e.g. $Out(F_n)$), and any group of analytic diffeomorphisms of a connected analytic manifold fixing a given point \cite{Weinberger:2013qy}. In fact, they introduced a notion of finite embeddability for groups and showed that the conjecture holds for all groups that are finitely embeddable into Hilbert space. Then they verified that the list of groups mentioned above are finitely embeddable into Hilbert space.

Let us recall the notion of finite embeddability for groups in the following. We shall first recall the notion of coarse embeddability due to Gromov.  
\begin{definition}[Gromov]
A countable discrete group $\Gamma$ is said to be coarsely embeddable into Hilbert space $H$ if there exists a map $f:\Gamma \to H$ such that 
\begin{enumerate}[(1)]
\item for any finite subset $F\subseteq \Gamma$, there exists $R >0$ such that if $\gamma^{-1}\beta\in F$, then $\|f(\gamma)- f(\beta)\|\leq R$;
\item for any $S>0$, there exists a finite subset $F\subseteq \Gamma$ such that if $\gamma^{-1}\beta\in \Gamma-F$, then $\|f(\gamma) - f(\beta)\| \geq S$.
\end{enumerate}
\end{definition}

The notion of finite embeddability for groups, introduced by Weinberger and Yu, is more flexible than the notion of coarse embeddability. 

\begin{definition}[\cite{Weinberger:2013qy}]
A countable discrete group $\Gamma$ is said to be finitely embeddable into Hilbert space $H$ if for any finite subset $F\subseteq \Gamma$, there exist a group $\Gamma'$ that is coarsely embeddable into $H$ and  a map $\phi: F\to \Gamma'$ such that 
\begin{enumerate}[(1)]
\item if $\gamma, \beta$ and $\gamma\beta$ are all in $F$, then $\phi(\gamma\beta) = \phi(\gamma) \phi(\beta)$;
\item if $\gamma$ is a finite order element in $F$, then $\ord(\phi(\gamma)) = \ord(\gamma)$. 
\end{enumerate}
\end{definition}

As mentioned above, Weinberger and Yu proved that Conjecture $\ref{wyconj}$ holds for all groups that are finitely embeddable into Hilbert space \cite[Theorem 1.4]{Weinberger:2013qy}. This result has some interesting applications, and we list a few of them in the following. We denote by  $N_\fin(\Gamma)$ the cardinality of the following collection of positive integers: 
\[ \big\{ d \in \mathbb N_+ \mid \exists \gamma\in \Gamma \ \textup{such that}\ \ord(\gamma) = d \ \textup{and} \ \gamma\neq e\big\}. \]
Then we have the following results from \cite{Weinberger:2013qy}.
\begin{enumerate}[(i)]
\item Let $\Gamma$ be an arbitrary countable discrete group. Suppose $\Gamma$ contains a nontrivial finite order element $\gamma\neq e$, that is, $\Gamma$ is not torsion free. Then $[p_\gamma]$ generates a subgroup of rank one in $K_0(C^\ast(\Gamma))$ and any nonzero multiple of $[p_\gamma]$ is not in the image of the assembly map $\mu: K_0^\Gamma(E\Gamma)\to K_0(C^\ast(\Gamma))$ \cite[Theorem 2.3]{Weinberger:2013qy}.

\item Recall that if $M$ is a compact oriented topological manifold, the structure group $S(M)$ is the abelian group of equivalence classes of all pairs $(f, M')$, where $M'$ is a compact oriented manifold and $f: M' \to M$ is an orientation preserving homotopy equivalence (cf. \cite{MR2061749}). The rank of the abelian group $S(M)$ measures the degree of non-rigidity for $M$. Suppose $\dim M = 4k-1 \geq 5 $ and $\Gamma = \pi_1(M)$ is finitely embeddable into Hilbert space. Then the rank of the abelian group  $S(M)$ is $\geq N_\fin(\Gamma)$ \cite[Theorem 1.5]{Weinberger:2013qy}. 

\item Now suppose $M$ is a closed smooth spin manifold which supports at least one metric of positive scalar curvature. Following Stolz \cite{MR1818778, stolz-concordance}, Weinberger and Yu \cite[Section 4]{Weinberger:2013qy}  introduced an abelian group $P(M)$ of concordance classes of all positive scalar curvature metrics to measure the size of the space of all positive scalar curvature metrics (see also Section $\ref{sec:psc}$ below).  Suppose $\Gamma= \pi_1(M)$ is finitely embeddable into Hilbert space. If $\dim M = 2k-1\geq 5$, then the rank of the abelian group $P(M)$ is $\geq N_\fin(\Gamma)$. If $\dim M = 4k-1\geq 5$, then the rank of the abelian group $P(M)$ is  $\geq N_\fin(\Gamma) + 1$ \cite[Theorem 1.7]{Weinberger:2013qy}.
\end{enumerate}

\begin{remark}\label{rm:feh}
In fact, the same argument in \cite{Weinberger:2013qy} can be used to prove the following slightly stronger result. Let $\mathcal J_0^\fin(C^\ast(\Gamma))$ be the abelian subgroup of $K_0^\fin(C^\ast(\Gamma))$ generated by elements $[p_\gamma] - [p_\beta]$ with   $\ord(\gamma) = \ord(\beta)$. 
We define the reduced finite part of $K^0(C^\ast(\Gamma))$ to be 
\[ \rfK_0^\fin(C^\ast(\Gamma)) = K_0^\fin(C^\ast(\Gamma))/\mathcal J_0^\fin(C^\ast(\Gamma)). \]
Let  $\widetilde{\mathcal M}_{\gamma_1, \cdots, \gamma_n}$ be the image of ${\mathcal M}_{\gamma_1, \cdots, \gamma_n}$ in $\rfK_0^\fin(C^\ast(\Gamma))$. If $\Gamma$ is finitely embeddable into Hilbert space, then 
\begin{enumerate}[(1)]
\item  the abelian group $\widetilde{\mathcal M}_{\gamma_1, \cdots, \gamma_n}$ has rank $n$,
\item any nonzero element in $K_0^\fin(C^\ast(\Gamma))$ is not in the image of the assembly map 
\[ \mu: K_0^\Gamma(E\Gamma) \to K_0(C^\ast(\Gamma)), \]
where $E\Gamma$ is the universal space for proper and free $\Gamma$-action. 
\end{enumerate}
\end{remark}

So one is led to the following conjecture, which is a slight generalization of Conjecture $\ref{wyconj}$.

\begin{conjecture}\label{genconj} Let $\Gamma$ be a countable discrete group. 
Suppose $\{\gamma_1, \cdots, \gamma_n\}$ is a collection of elements in $\Gamma$ with distinct finite orders and $\gamma_i\neq e$ for all $1\leq i\leq n$. Then 
\begin{enumerate}[(1)]
\item  the abelian group $\widetilde{\mathcal M}_{\gamma_1, \cdots, \gamma_n}$ has rank $n$,
\item any nonzero element in $K_0^\fin(C^\ast(\Gamma))$ is not in the image of the assembly map 
\[ \mu: K_0^\Gamma(E\Gamma) \to K_0(C^\ast(\Gamma)), \]
where $E\Gamma$ is the universal space for proper and free $\Gamma$-action. 
\end{enumerate}
\end{conjecture}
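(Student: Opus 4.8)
The plan is to prove both parts of Conjecture~\ref{genconj} for an arbitrary countable discrete group by reducing them to the construction of a sufficiently rich family of traces on $C^\ast(\Gamma) = C^\ast_{\max}(\Gamma)$ that detect the idempotents $p_\gamma$. The backbone is the canonical trace $\tau\colon C^\ast(\Gamma)\to\mathbb C$ determined on $\mathbb C[\Gamma]$ by $\tau(\sum_g a_g g)=a_e$. Since $\gamma^k\neq e$ for $1\le k\le d-1$ and $\gamma^d=e$, one computes directly that $\tau(p_\gamma)=1/d$ where $d=\ord(\gamma)$. Thus $\tau$ takes the distinct values $1/\ord(\gamma_i)$ on the generators $[p_{\gamma_i}]$ and vanishes on every generator $[p_\gamma]-[p_\beta]$ of $\mathcal J_0^\fin$; hence $\tau$ descends to a homomorphism $\rfK_0^\fin(C^\ast(\Gamma))\to\mathbb R$ that already separates elements of different orders. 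This settles the rank-one case and shows that no nonzero rational combination supported on a single order can lie in $\mathcal J_0^\fin$.

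To upgrade the bound from $1$ to $n$, I would replace $\tau$ by the family of \emph{delocalized traces} $\tau_{[\gamma]}$ indexed by conjugacy classes of nontrivial torsion elements, defined on $\mathbb C[\Gamma]$ by $\tau_{[\gamma]}(\sum_g a_g g)=\sum_{g\in[\gamma]}a_g$. A short computation gives $\tau_{[\gamma_i]}(p_{\gamma_j})=\frac{1}{\ord(\gamma_j)}\,\#\{k:\gamma_j^{\,k}\in[\gamma_i]\}$, which is strictly positive when $i=j$ and vanishes unless $\ord(\gamma_i)$ divides $\ord(\gamma_j)$. Because the orders $\ord(\gamma_1),\dots,\ord(\gamma_n)$ are distinct, the pairing matrix $\big(\tau_{[\gamma_i]}([p_{\gamma_j}])\big)$ is triangular with respect to the divisibility partial order and has positive diagonal; a Möbius inversion over this poset (exactly as in the combinatorial step of \cite{Weinberger:2013qy}) then produces $n$ integer combinations of the $\tau_{[\gamma_i]}$ whose pairing matrix is invertible. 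Since each $\tau_{[\gamma_i]}$ depends only on conjugacy data and hence annihilates $\mathcal J_0^\fin$, these functionals descend to $\widetilde{\mathcal M}_{\gamma_1,\dots,\gamma_n}$ and witness rank $n$, establishing part~(1).

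For part~(2), the same delocalized traces provide the obstruction. The assembly map $\mu$ is defined on $K_0^\Gamma(E\Gamma)$, where $\Gamma$ acts \emph{freely}; every cycle is therefore supported away from nontrivial isotropy, and the delocalized trace $\tau_{[\gamma]}$ at a nontrivial torsion element annihilates the entire image of $\mu$, since the higher index of a $\Gamma$-invariant geometric operator on a free cocompact $\Gamma$-space has vanishing $[\gamma]$-component for $\gamma\neq e$. Given a nonzero $\xi\in K_0^\fin(C^\ast(\Gamma))$, part~(1) guarantees some $\tau_{[\gamma_i]}(\xi)\neq 0$, whence $\xi\notin\mathrm{im}\,\mu$. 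Thus both assertions follow once the delocalized traces are available as $K$-theory homomorphisms.

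The main obstacle is precisely the single analytic step suppressed above: proving that each delocalized trace $\tau_{[\gamma]}$ is well defined as a homomorphism on $K_0(C^\ast_{\max}(\Gamma))$—equivalently, that it is bounded in the maximal norm—for an \emph{arbitrary} countable group. The canonical trace $\tau$ extends without difficulty, but a general delocalized trace is a priori only a linear functional on $\mathbb C[\Gamma]$ and may be unbounded. For groups with enough finite-dimensional unitary representations (e.g.\ residually finite groups) one realizes $\tau_{[\gamma]}$ as a limit of traces $\tr\circ\pi$ and boundedness is automatic; for finitely embeddable groups one instead transports the classes into a coarsely embeddable quotient where the maximal coarse Baum--Connes machinery of \cite{MR2431253, Weinberger:2013qy} supplies the required bounded functionals. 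The content of the general conjecture is to manufacture these detecting traces intrinsically on $C^\ast_{\max}(\Gamma)$, and I expect controlling their maximal norm uniformly, with no representation-theoretic or coarse-geometric input, to be the decisive difficulty.
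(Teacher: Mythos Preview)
The statement you are attempting to prove is labeled \emph{Conjecture} in the paper, and the paper offers no proof of it; it only records (Remark~\ref{rm:feh}) that the methods of \cite{Weinberger:2013qy} establish it for groups finitely embeddable into Hilbert space. So there is no ``paper's proof'' to compare against, and your final paragraph is exactly right: the boundedness of delocalized traces on $C^\ast_{\max}(\Gamma)$ for an arbitrary countable group is the whole content of the conjecture, not a technical detail to be filled in. What you have written is therefore not a proof but a reduction to an open problem, and you correctly flag it as such.

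Two further gaps in the outline itself, beyond the acknowledged one. First, your claim that ``each $\tau_{[\gamma_i]}$ depends only on conjugacy data and hence annihilates $\mathcal J_0^\fin$'' is false as stated: membership in $\mathcal J_0^\fin$ requires only $\ord(\gamma)=\ord(\beta)$, not that $\gamma$ and $\beta$ be conjugate, so for example $\tau_{[\gamma]}\big([p_\gamma]-[p_\beta]\big)$ can be nonzero when $\gamma,\beta$ have equal order but lie in different conjugacy classes. The fix is to replace $\tau_{[\gamma_i]}$ by the \emph{order trace} $\tau_{d}(\sum a_g g)=\sum_{\ord(g)=d}a_g$, which gives $\tau_{d}(p_\gamma)=\phi(d)/\ord(\gamma)$ when $d\mid\ord(\gamma)$ and $0$ otherwise; this depends only on $\ord(\gamma)$, so it genuinely kills $\mathcal J_0^\fin$, and the resulting pairing matrix is still triangular for the divisibility order with nonzero diagonal. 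Second, your deduction of part~(2) from part~(1) does not work: a nonzero $\xi\in K_0^\fin(C^\ast(\Gamma))$ may lie in $\mathcal J_0^\fin$, in which case every order trace vanishes on it and your argument produces no obstruction. For part~(2) you would need the full family of conjugacy-class traces (or some other detecting family) to separate all of $K_0^\fin$, which again runs directly into the unresolved boundedness question.
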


\section{Finite embeddability and strongly finite embeddability}\label{sec:sfe}

In this section, we introduce the notion of strongly finitely embeddability for groups. Since we are interested in the fundamental groups of manifolds, all groups are assumed to be finitely generated in the following discussion.

Let $\Gamma$ be a countable discrete group. Then any set of $n$ automorphisms of $\Gamma$, say, $\psi_1, \cdots, \psi_n  \in \textup{Aut}(\Gamma) $, induces a natural action of $F_n$ the free group of $n$ generators on $\Gamma$. More precisely, if we denote the set of generators of $F_n$ by $\{s_1, \cdots, s_n\}$, then we have a homomorphism $F_n\to \textup{Aut}(\Gamma)$ by $s_i\mapsto \psi_i$. This homomorphism induces an action of $F_n$ on $\Gamma$. We denote by $\Gamma\rtimes_{\{\psi_1, \cdots, \psi_n\}} F_n$ the semi-direct product of $\Gamma$ and $F_n$ with respect to this action. If no confusion arises, we shall write $\Gamma\rtimes F_n$ instead of $\Gamma\rtimes_{\{\psi_1, \cdots, \psi_n\}} F_n$. 

\begin{definition}
A countable discrete group $\Gamma$ is said to be strongly finitely embeddable into Hilbert space $H$, if $\Gamma\rtimes_{\{\psi_1, \cdots, \psi_n\}} F_n$ is finitely embeddable into Hilbert space $H$ for all $n\in \mathbb N$ and all $\psi_1, \cdots, \psi_n  \in \textup{Aut}(\Gamma) $. 
\end{definition}

We remark that all coarsely embeddable groups are strongly finitely embeddable. Indeed, if a group $\Gamma$ is coarsely embeddable into Hilbert space, then $\Gamma\rtimes_{\{\psi_1, \cdots, \psi_n\}} F_n$ is coarsely embeddable (hence finitely embeddable) into Hilbert space for all $n\in \mathbb N$ and all $\psi_1, \cdots, \psi_n  \in \textup{Aut}(\Gamma) $.

 If a group $\Gamma$ has a torsion free normal subgroup $\Gamma^\prime$ such that $\Gamma/\Gamma^\prime$ is residually finite, then $\Gamma$ is strongly finitely embeddable into Hilbert space. Indeed, recall that any finitely generated group has only finitely many distinct subgroups of a given index. 
Let $\Gamma_m$ be the intersection of all subgroups of $\Gamma$ with index at most $m$. Then $\Gamma/\Gamma_m$ is a finite group. Moreover, for given $\psi_1, \cdots, \psi_n  \in \textup{Aut}(\Gamma) $, the induced action of $F_n$ on $\Gamma$ descends to an action of $F_n$ on $\Gamma/\Gamma_m$. In other words, we have a natural homomorphism 
\[ \phi_m\colon \Gamma\rtimes F_n \to (\Gamma/\Gamma_m)\rtimes G_m \]
where $G_m$ is the image of $F_n$ under the homomorphism $F_n\to \textup{Aut}(\Gamma/\Gamma_m)$.  It follows that, for any finite set $F\subseteq \Gamma$,  there exists a sufficiently large $m$ such that  the map  
\[ \phi_m: F\subset \Gamma\rtimes F_n \to (\Gamma/\Gamma_m)\rtimes G_m \]
satisfies 
\begin{enumerate}[(1)]
\item if $\gamma, \beta$ and $\gamma\beta$ are all in $F$, then $\phi(\gamma\beta) = \phi(\gamma) \phi(\beta)$;
\item if $\gamma$ is a finite order element\footnote{Note that in this case, all finite order elements in $\Gamma\rtimes_{\{\psi_1, \cdots, \psi_n\}} F_n$ come from $\Gamma$.} in $F$, then $\ord(\phi(\gamma)) = \ord(\gamma)$.
\end{enumerate}
Notice that $G_m$ is a finite group and  $(\Gamma/\Gamma_m)\rtimes G_m $ is coarsely embeddable into Hilbert space. This shows that $\Gamma$ is strongly finitely embeddable into Hilbert space.

To summarize, we see that the class of strongly finitely embeddable groups includes all residually finite groups, amenable groups, hyperbolic groups, and virtually torsion free groups (e.g. $Out(F_n)$). 

The notion of sofic groups is a generalization of amenable groups and residually finite groups. It is an open question whether sofic groups are  (strongly) finitely embeddable into Hilbert space. Narutaka Ozawa, Denise Osin and Thomas Delzant have independently constructed examples of groups which are not finitely embeddable into Hilbert space. An affirmative answer to the above question would imply that there exist non-sofic groups.

By definition, strongly finite embeddability implies finite embeddability. It is an open question whether the converse holds:

\begin{question*}
If a group is finitely embeddable into Hilbert space, then does it follow that the group is also  strongly finitely embeddable into Hilbert space?
\end{question*}

In fact, it was shown in \cite{Weinberger:2013qy} that Gromov's monster groups and any group of analytic diffeomorphisms of an analytic connected manifold fixing a given point are finitely embeddable into Hilbert space. It is still an open question whether these groups are strongly finitely embeddable into Hilbert space.  
 
\begin{remark}
The notion of strongly finite embeddability into Hilbert space can be generalized to strongly finite embeddability into Banach spaces with Property (H) \cite{MR2980001}. The main results of this paper remain true under this more flexible condition.
\end{remark}

\section{The moduli space of positive scalar curvature metrics} \label{sec:psc}

In this section, we prove the main results of the paper. We first recall the definition of the moduli group of positive scalar curvature metrics on a manifold \cite{Weinberger:2013qy}. Then we use the higher rho invariant and the finite part of $K$-theory of group $C^\ast$-algebra to give a lower bound on the rank of this moduli group.

Let $M$ be an oriented smooth closed manifold with $\dim M\geq 5$ and its fundamental group $\pi_1(M) = \Gamma$. Assume that $M$ carries a metric of positive scalar curvature. We denote it by $g_M$.  Let $I$ be the closed interval $[0, 1]$. Consider the connected sum $(M\times I)\sharp (M\times I)$, where the connected sum is performed away from the boundary of $M\times I$. Note that $\pi_1\big((M\times I)\sharp(M\times I)\big) = \Gamma\ast \Gamma$ the free product of two copies of $\Gamma$. 
\begin{definition}
We define the generalized connected sum $(M\times I)\natural(M\times I)$ to be the manifold obtained from $(M\times I)\sharp(M\times I)$ by removing the kernel of the homomorphism $\Gamma\ast \Gamma\to \Gamma$ through surgeries away from the boundary\footnote{The kernel of $\Gamma\ast \Gamma\to \Gamma$ is generated by loops which do not meet the boundary, and we may remove the kernel by surgeries away from the boundary.}. 
\end{definition}

Note that $(M\times I)\natural(M\times I)$ has four boundary components, two of them being $M$ and the other two being $-M$, where $-M$ is the manifold $M$ with its reversed orientation. 
Now suppose $g_1$ and $g_2$ are two positive scalar curvature metrics on $M$. We endow one boundary component $M$ with $g_M$, and endow the two $-M$ components with $g_1$ and $g_2$. Then by the Gromov-Lawson and Schoen-Yau surgery theorem for positive scalar curvature metrics \cite{MGBL80b, RSSY79b}, there exists a positive scalar curvature metric on $(M\times I)\natural (M\times I)$ which is a product metric near all boundary components. In particular, the restriction of this metric on the other boundary component $M$ has positive scalar curvature. We denote this metric on $M$ by $g$.     

\begin{definition}
Two positive scalar curvature metrics $g$ and $h$ on $M$ are concordant if there exists a positive scalar curvature metric  on $M\times I$ which is a product metric near the boundary and restricts to $g$ and $h$ on the two boundary components respectively. 
\end{definition}

One can in fact show that if $g$ and $g'$ are two positive scalar curvature metrics on $M$ obtained from the same pair of positive scalar curvature metrics $g_1$ and $g_2$ by the above procedure, then $g$ and $g'$ are concordant \cite[Section 4]{Weinberger:2013qy}. 

\begin{definition}
Fix a positive scalar curvature metric $g_M$ on $M$.  Let $P^+(M)$ be the set of all concordance classes of positive scalar metrics on $M$. Given $[g_1]$ and $[g_2]$ in $P^+(M)$, we define the sum of $[g_1]$ and $[g_2]$ (with respect to $[g_M]$) to be $[g]$ constructed from the procedure above. Then it is not difficult to verify that $P^+(M)$ becomes an abelian semigroup under this addition. We define the abelian group $P(M)$ to be the Grothendieck group of $P^+(M)$.  
\end{definition}

\begin{remark}
The abelian group $P(M)$ is defined for any closed smooth manifold with  $\dim M \geq 5$ which supports positive scalar curvature metrics. However, for our applications, we assume that $M$ is a spin manifold. 
\end{remark}

 The abelian group $P(M)$ measures the size of the space of positive scalar curvature metrics on $M$. Weinberger and Yu used the finite part of $K$-theory of $C^\ast(\pi_1(M))$ to give a lower bound of the rank of $P(M)$  \cite[Theorem 4.1]{Weinberger:2013qy}. 
More precisely, let $r_\fin(\Gamma)$ be the rank of the abelian subgroup of $K_0(C^\ast(\Gamma))$ generated by $[p_\gamma]$ for all finite order elements $\gamma\in \Gamma$. Note that here $\gamma$ is allowed to be the identity element $e$.  Suppose $M$ is a closed smooth spin manifold which supports positive scalar curvature metrics. Then one has
\begin{enumerate}[(1)]
\item if $\dim M = 2k-1 \geq 5$, then the rank of $P(M)$ is $\geq r_\fin(\Gamma) -1$; 
\item if $\dim M = 4k-1\geq 5 $, then the rank of $P(M)$ is $\geq r_\fin(\Gamma)$.
\end{enumerate}

One of main ingredients of their proof is the following proposition. For a finite group $F$, a $F$-manifold $Y$ is called $F$-connected if the quotient $Y/F$ is connected. Let $\mathbb Z_d$ be the cyclic group of order $d$. 
\begin{proposition}[{\cite[Proposition 4.4]{Weinberger:2013qy}}]\label{prop:gen}
Given positive integers $d$ and $k$,
\begin{enumerate}[(1)]
\item there exist $\mathbb Z_d$-connected closed spin $\mathbb Z_d$-manifolds $\{Y_1, \cdots, Y_n\}$ with $\dim Y_i = 2k$  such that 
\begin{enumerate}[$(a)$]
\item the $\mathbb Z_d$-equivariant indices of the Dirac operators on $\{Y_1, \cdots, Y_n\}$ rationally generate $RO(\mathbb Z_d)\otimes \mathbb Q$, modulo the regular representation,
\item $\mathbb Z_d$ acts on $Y_i$ freely except for finitely many fixed points,  
\end{enumerate} 
where $RO(\mathbb Z_d)$ is the real representation ring of $\mathbb Z_d$; 

\item there exist $\mathbb Z_d$-connected closed spin $\mathbb Z_d$-manifolds $\{Y_1, \cdots, Y_n\}$ with $\dim Y_i = 4k$ such that 
\begin{enumerate}[$(a)$]
\item the $\mathbb Z_d$-equivariant indices of the Dirac operators on $\{Y_1, \cdots, Y_n\}$ rationally generate $RO(\mathbb Z_d)\otimes \mathbb Q$,
\item $\mathbb Z_d$ acts on $Y_i$ freely except for finitely many fixed points.  
\end{enumerate} 

\end{enumerate}
\end{proposition}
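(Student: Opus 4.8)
The plan is to reduce the statement to a computation of $\mathbb Z_d$-equivariant indices of explicit model manifolds via the Atiyah--Bott--Singer Lefschetz fixed point theorem, and thence to a finite representation-theoretic verification. Recall that if $Y$ is a closed spin $\mathbb Z_d$-manifold all of whose fixed points are isolated, then the character of $\ind^{\mathbb Z_d}(D_Y)\in R(\mathbb Z_d)$ equals $\hat A(Y)$ at the identity and, at $g\neq e$, a sum over the fixed points $p$ of local contributions of the form $\prod_l(\lambda_{p,l}^{1/2}-\lambda_{p,l}^{-1/2})^{-1}$, where the $\lambda_{p,l}$ are the eigenvalues of $g$ on $T_pY$ and the square roots are pinned down by the equivariant spin structure. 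Thus the equivariant Dirac index is determined by the rotation data at the finitely many fixed points, and it suffices to arrange this data so that the resulting characters span the required $\mathbb Q$-vector space.

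For the models I would use linear $\mathbb Z_d$-spheres and twisted variants of them. If $W$ is a complex $\mathbb Z_d$-representation of complex dimension $k$ (resp.\ $2k$) all of whose weights are coprime to $d$, then the one-point compactification $W\cup\{\infty\}\cong S^{2k}$ (resp.\ $S^{4k}$) carries a $\mathbb Z_d$-action with exactly two fixed points and free complement, preserves the (unique) spin structure---when $d$ is even this imposes a parity condition on the weights that is easily met---and is $\mathbb Z_d$-connected since it is connected. To reach weights not coprime to $d$ I would induce up from proper subgroups: for $H\le\mathbb Z_d$ and an admissible $H$-manifold $Z$, the manifold $\mathbb Z_d\times_H Z$ is $\mathbb Z_d$-connected (its quotient is $Z/H$), free off finitely many points, and has $\ind^{\mathbb Z_d}(D_{\mathbb Z_d\times_H Z})=\mathrm{Ind}_H^{\mathbb Z_d}\,\ind^{H}(D_Z)$. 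When a linear sphere degenerates (see below) one can instead glue two equivariant disk bundles along an equivariant diffeomorphism of their free boundary lens spaces, obtaining homotopy spheres that realize a wider range of two-point fixed-point data. For the $\dim=4k$ statement I would additionally take $d$ disjoint copies of a closed spin $4k$-manifold $N$ with $\hat A(N)\neq 0$ (e.g.\ a product of K3 surfaces), permuted freely by $\mathbb Z_d$: this is $\mathbb Z_d$-connected with quotient $N$, free, and has equivariant index $\hat A(N)$ times the regular representation. This last family distinguishes the two cases: a free $\mathbb Z_d$-manifold always has equivariant Dirac index a multiple of the regular representation, so in dimension $2k$ with $k$ odd, where $\hat A\equiv 0$, one cannot detect that direction, whereas in dimension $4k$ the $\hat A$-genus supplies it and one obtains all of $RO(\mathbb Z_d)\otimes\mathbb Q$.

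It then remains to verify the spanning statement. Up to a universal constant, the equivariant index of $W\cup\{\infty\}$ is a fixed linear combination of the products $\prod_l(\zeta_g^{m_l/2}-\zeta_g^{-m_l/2})^{-1}$, and a Vandermonde/interpolation argument in the group ring of $\mathbb Z_d$ shows that, as the weight tuple $m$ ranges over admissible tuples, these products span the relevant space of virtual characters; a standard reduction to primitive characters of subgroups (via Frobenius reciprocity) shows that the inductions from the cyclic subgroups then exhaust what remains, and the K3-type manifold supplies the last direction in the $4k$ case. Since all the $Y_i$ are genuine spin manifolds, their equivariant indices lie automatically in the image of $RO(\mathbb Z_d)\to R(\mathbb Z_d)$, so the span is as asserted.

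The hard part is the spin bookkeeping inside the fixed point formula. The signs of the local contributions are governed by the equivariant spin structure, and in dimension $\equiv 2\pmod 4$ the two contributions of a linear sphere can cancel identically---as already happens for $S^2=\mathbb{CP}^1$---so handling those cases is exactly what forces the more flexible models (twisted spheres from equivariant surgery, products with a suitable $2$-dimensional $\mathbb Z_d$-surface, or manifolds with more than two fixed points) and requires checking that the desired fixed-point data is both spin-admissible and realizable with free complement. Once the signs are settled, the remaining spanning assertion is a self-contained, if slightly intricate, computation in the representation ring.
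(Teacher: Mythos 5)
This proposition is not proved in the paper at hand --- it is quoted from Weinberger--Yu --- so your argument has to stand on its own, and as written it does not: its principal family of examples carries no index information at all. The one-point compactification of a complex $\mathbb{Z}_d$-representation $W$ is equivariantly diffeomorphic to the unit sphere $S(W\oplus\mathbb{R})$ and admits an invariant round metric of positive scalar curvature; by Lichnerowicz there are no harmonic spinors, so the equivariant index of its Dirac operator is $0$ in $R(\mathbb{Z}_d)$, for every admissible choice of weights and of equivariant spin structure. Equivalently, in the Atiyah--Bott formula the contributions of the two poles cancel in \emph{every} even dimension, not merely in dimensions $\equiv 2 \pmod 4$ as your aside about $S^2$ suggests. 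Consequently the terms $\prod_l(\zeta^{m_l/2}-\zeta^{-m_l/2})^{-1}$ that your Vandermonde/interpolation step is supposed to span with never occur singly, only in cancelling pairs: the classes actually realized by your linear spheres, and by their inductions $\mathbb{Z}_d\times_H Z$ from subgroups (induction of $0$ is $0$), are all zero. The only nonzero indices your construction genuinely produces are the rational multiples of the regular representation coming from the free examples with $\hat A(N)\neq 0$, which is far from generating $RO(\mathbb{Z}_d)\otimes\mathbb{Q}$ modulo the regular representation.

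The real content of the proposition is therefore exactly the part you defer to a parenthetical: one must produce closed spin $\mathbb{Z}_d$-manifolds, free away from finitely many isolated fixed points, whose rotation data at the fixed points is \emph{not} that of a linear model, so that the local contributions do not cancel, and enough of them for their indices to span. This is a genuine realization problem: the candidate data must be compatible with an equivariant spin structure and a free complement, and the Atiyah--Bott sum of local terms must be the character of an honest virtual representation, so not every tuple is realizable. Your one sentence about gluing two equivariant disk bundles along an equivariant diffeomorphism of their boundary lens spaces is precisely where all of this work lives --- which pairs of weight tuples admit such an identification, whether the result is spin, and what its equivariant index is --- and none of it is carried out; the spin ``bookkeeping'' you flag as the hard part is not the obstacle, the construction of non-linear fixed-point data is. Any correct proof, including the one in the cited reference, rests on an explicit construction of such almost-free $\mathbb{Z}_d$-manifolds with non-vanishing equivariant indices (with the spanning verified by computations in the spirit of the lens-space eta-invariant calculations of Botvinnik and Gilkey); without an input of this kind the proposal does not prove the proposition.
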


Now let $M$ be a closed spin manifold with a positive scalar curvature metric $g_M$ and $\dim M\geq 5$  as before. This proposition, combined with the relative higher index theorem \cite{UB95, MR3122162}, allows one to construct  distinct concordance classes of positive scalar curvature metrics in $P(M)$. More precisely, for each nontrivial finite order element $\gamma\in \Gamma$, one can construct a new positive scalar curvature metric $h_\gamma$ on $M$ such that the relative higher index $\ind_\Gamma(g_M, h_\gamma) = [p_\gamma] \in K_0(C^\ast(\Gamma)) $, where $p_\gamma = \frac{1}{d}\sum_{k=1}^d \gamma^k$ with $d = \ord(\gamma)$. We refer the reader to \cite[Theorem 4.1]{Weinberger:2013qy} for the detailed construction. Here let us  recall the definition of this relative higher index $\ind_\Gamma(g_M, h_\gamma)$.  We endow $M\times \mathbb R$ with the metric $g_t + (dt)^2$ where  $g_t$ is a smooth path of Riemannian metrics on $M$ such that 
\[ 
g_t = \begin{cases}
g_M \quad \textup{for $t\leq 0$, }\\
h_\gamma \quad \textup{for $t\geq 1$,} \\
\textup{any smooth path of metrics from $g_M$ to $h_\gamma$ for $0\leq t \leq 1$.}
\end{cases}\] 
Then $M\times \mathbb R$ becomes a complete Riemannian manifold with positive scalar curvature away from a compact subset. Denote by $D_{M\times \mathbb R}$ the corresponding Dirac operator on $M\times \mathbb R$ with respect to this metric. Then the higher index of $D_{M\times \mathbb R}$  is well-defined and is denoted by $\ind_\Gamma(g_M, h_\gamma)$ (cf. \cite{MR3122162}).    

To summarize, one can construct distinct elements in $P(M)$ by surgery theory and the relative higher index theorem \cite{UB95,MR3122162}. Moreover, these elements are distinguished by their relative higher indices (with respect to $g_M$) \cite[Theorem 4.1]{Weinberger:2013qy}. 

In the following, we shall prove that these concordance classes of positive scalar curvature metrics remain distinct even after modulo diffeomorphisms. Moreover, our method of proof shows that these positive scalar curvature metrics constructed in \cite{Weinberger:2013qy} actually give distinct spin-bordism classes of positive scalar curvature metrics. We recall the definition of spin-bordism of positive scalar curvature metrics. 

\begin{definition}\label{def:sc}
Two metrics $g_1$ and $g_2$ are said to be spin-bordant if there exists a compact spin manifold $Y$ with a positive scalar curvature metric $g$ such that 
\begin{enumerate}[(1)]
\item the boundary of $Y$ is $\partial Y = M \amalg (-M)$, 
\item the metric $g$ is a product metric near the boundary, and $g$ restricts to $g_1$ (resp. $g_2$) on $M$ (resp. $-M$),
\item $Y$ admits a Galois $\Gamma$-covering $\widetilde Y$ which at the two boundary components restricts to the universal cover of $M$, where $\Gamma = \pi_1(M)$.   
\end{enumerate}  

\end{definition}

Now let us define the moduli group of positive scalar curvature metrics. We denote by $\diff(M)$ the group of all diffeomorphisms from $M$ to $M$. 

\begin{definition}
For each diffeomorphism $\psi \in \diff(M)$ and $g\in P(M)$, we denote by $\psi^\ast(g)$  the pullback metric of $g$ by $\psi$. We define $P_0(M)$ to be the abelian subgroup of $P(M)$ generated by elements of the form $[g] - [\psi^\ast(g)]$ for all $g\in P(M)$ and all $\psi\in \diff(M)$. Then the moduli group of positive scalar curvature metrics on $M$ is defined to be
\[ \widetilde P(M) = P(M)/P_0(M). \]
\end{definition}

\begin{remark}
Notice that, in general, the action of $\diff(M)$ on $P(M)$ is \textit{not} linear, but affine. 
\end{remark}

Note that every diffeomorphism $\psi\in \diff(M)$ induces an automorphism\footnote{To be precise, it only determines an outer automorphism of $\Gamma$. But all representatives in this outer automorphism class give rise to  the same automorphism of $K_i(C^\ast(\Gamma))$.} of $\Gamma = \pi_1(M)$. We denote this automorphism of $\Gamma$ also by $\psi$.

Recall that $N_\fin(\Gamma)$ is the the cardinality of the following collection of positive integers:
\[ \{ d\in \mathbb N_+\mid \exists \gamma\in \Gamma \ \textup{such that} \ \gamma\neq e \ \textup{and}\ \ord(\gamma) = d \}. \]
Then we have the following main theorem of the paper. 
\begin{theorem}\label{thm:psc}
Let $M$ be a closed spin manifold  with $\dim M = 2k+1 \geq 5$ which carries a positive scalar curvature metric. If $\Gamma =  \pi_1(M)$ is strongly finitely embeddable into Hilbert space, then the rank of the abelian group  $\widetilde P(M)$ is  $\geq N_\fin(\Gamma)$.
\end{theorem}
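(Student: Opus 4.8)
The plan is to lift the construction of Weinberger--Yu from the index level to the secondary (higher rho) level, and then show that the higher rho invariant is insensitive to the action of $\diff(M)$, so that it descends to a well-defined homomorphism on $\widetilde P(M)$ whose image we can estimate from below. Fix distinct finite orders $d_1, \dots, d_N$ realized by nontrivial torsion elements $\gamma_1, \dots, \gamma_N \in \Gamma$, where $N = N_\fin(\Gamma)$. For each $\gamma_i$, the surgery-theoretic construction recalled above (using Proposition~\ref{prop:gen}, the Gromov--Lawson/Schoen--Yau surgery theorem, and the relative higher index theorem) produces a positive scalar curvature metric $h_i$ on $M$ with $\ind_\Gamma(g_M, h_i) = [p_{\gamma_i}] \in K_0(C^\ast(\Gamma))$. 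The first step is to package the difference of $g_M$ and $h_i$ as a higher rho invariant: on $M \times \mathbb R$ with the interpolating metric $g_t + (dt)^2$ (which has positive scalar curvature outside a compact set), the Dirac operator has a well-defined higher rho invariant $\rho_i \in K_0(C_{L,0}^\ast(\widetilde M \times \mathbb R)^\Gamma)$, and via the identification $C_{L,0}^\ast(\widetilde M \times \mathbb R)^\Gamma \sim C_{L,0}^\ast(\mathrm{pt})^\Gamma$ (up to suspension) together with the boundary map, $\rho_i$ maps to the relative index class $[p_{\gamma_i}]$. This is essentially the Higson--Roe/Piazza--Schick picture relating $\rho$-invariants to relative indices.

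Next I would establish the diffeomorphism-invariance statement: if $\psi \in \diff(M)$, then the higher rho invariant of $(D, \psi^\ast g)$ equals that of $(D, g)$ after applying the automorphism $\psi_\ast$ induced on $K_0(C_{L,0}^\ast(\widetilde M)^\Gamma)$ (equivalently on $K_0(C^\ast(\Gamma))$ in the relevant quotient). The point is that $\psi$ lifts to a $\Gamma$-equivariant isometry $\widetilde M \to \widetilde M$ twisting the $\Gamma$-action by $\psi \in \mathrm{Aut}(\Gamma)$, and pulling back the spinor bundle intertwines the two Dirac operators; this is a functoriality/naturality argument for $\rho$ under equivariant isometries. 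Consequently, in the \emph{reduced} finite part $\widetilde K_0^\fin(C^\ast(\Gamma))$ — where one has already quotiented by $[p_\gamma] - [p_\beta]$ for $\ord(\gamma) = \ord(\beta)$, cf. Remark~\ref{rm:feh} — the relative higher index of a pair $(g, \psi^\ast g)$ becomes trivial, because $\psi_\ast$ permutes the $p_\gamma$'s among elements of the same order. Hence the assignment $[g] \mapsto (\text{relative index of }(g_M, g)\text{ in }\widetilde K_0^\fin)$ kills $P_0(M)$ (here one uses that the action is affine, but the \emph{difference} $[g]-[\psi^\ast g]$ maps to a relative index between $\psi$-related metrics, which is $\psi_\ast$-equivariant and hence vanishes in the reduced quotient), so it factors through a homomorphism $\widetilde P(M) \to \widetilde K_0^\fin(C^\ast(\Gamma))$.

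The final step is to compute the rank of the image. By construction the classes $[h_i] \in \widetilde P(M)$ map to the images $[\widetilde p_{\gamma_i}]$ of $[p_{\gamma_i}]$ in $\widetilde K_0^\fin(C^\ast(\Gamma))$. Since $\Gamma$ is strongly finitely embeddable into Hilbert space, $\Gamma$ itself is finitely embeddable, so by Remark~\ref{rm:feh} the subgroup $\widetilde{\mathcal M}_{\gamma_1, \dots, \gamma_N}$ generated by $[\widetilde p_{\gamma_1}], \dots, [\widetilde p_{\gamma_N}]$ has rank exactly $N = N_\fin(\Gamma)$. Therefore $\widetilde P(M)$ has rank $\geq N_\fin(\Gamma)$, as claimed. (The strong — rather than merely ordinary — finite embeddability is what is genuinely needed to handle the full $\diff(M)$-action: passing from $[g]$ to $[\psi^\ast g]$ for varying $\psi$ is governed by the semidirect products $\Gamma \rtimes F_n$, and one must know the relevant finite-part computation survives after adjoining these automorphisms; this is exactly the content of the definition of strong finite embeddability, and it is where the hypothesis is used.)

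I expect the main obstacle to be the diffeomorphism-invariance step — precisely, showing that the higher rho invariant transforms \emph{equivariantly} under $\psi$ at the level of the $K$-theory of the localization algebra, and tracking this compatibly through the boundary map to conclude that $[g] - [\psi^\ast g]$ dies in the reduced finite part. One has to be careful that the lift of $\psi$ to the universal cover is only canonical up to the $\Gamma$-action (reflecting that $\psi$ determines only an outer automorphism), and to check that the ambiguity acts trivially on the relevant $K$-groups; this is the analogue, at the secondary level, of the footnote remark that inner automorphisms act trivially on $K_i(C^\ast(\Gamma))$. A secondary technical point is the bookkeeping identifying $K_0(C_{L,0}^\ast(\widetilde M \times \mathbb R)^\Gamma)$ with (a suspension of) $K_0(C^\ast(\Gamma))$ and matching the higher rho class with the relative higher index — standard, but it must be done carefully enough to transport naturality through.
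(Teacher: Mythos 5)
There is a genuine gap, and it is exactly at the point your own last paragraph identifies as the "main obstacle": the descent of your invariant to $\widetilde P(M)$. Your proposed homomorphism is built from the \emph{relative higher index} $[g]\mapsto \ind_\Gamma(g_M,g)$, and you claim that $[g]-[\psi^\ast g]$ dies in the reduced finite part $\rfK_0^\fin(C^\ast(\Gamma))$ because "$\psi_\ast$ permutes the $p_\gamma$'s." This does not work. First, for a general $g\in P(M)$ the class $\ind_\Gamma(g_M,g)$ has no reason to lie in the finite part at all, so its "image in $\rfK_0^\fin$" is not defined. Second, and more seriously, by additivity and naturality of the relative index one has
\[
\ind_\Gamma(g_M,g)-\ind_\Gamma(g_M,\psi^\ast g)\;=\;\bigl(1-\psi_\ast\bigr)\ind_\Gamma(g_M,g)\;-\;\ind_\Gamma(g_M,\psi^\ast g_M),
\]
and the correction term $\ind_\Gamma(g_M,\psi^\ast g_M)$ is an uncontrolled element of $K_0(C^\ast(\Gamma))$: it is not a combination of the $[p_\gamma]$'s, not of the form $[x]-\psi_\ast[x]$, and does not vanish in any of the quotients you consider. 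This is precisely the subtlety the paper flags: it is \emph{not} clear that $\ind_{rel}$ descends to $\widetilde P(M)$, which is why the actual proof works with the higher rho invariant $\varrho(h)=\rho(D,h)-\rho(D,g_M)\in K_1(C^\ast_{L,0}(\widetilde M)^\Gamma)$. Since $\rho$ depends on a single metric and satisfies exact naturality $\rho(D,\psi^\ast h)=\psi_\ast\rho(D,h)$, the $g_M$-terms cancel and $\varrho(h)-\varrho(\psi^\ast h)=\rho(D,h)-\psi_\ast\rho(D,h)$ lies in the subgroup $\mathcal I_1$ generated by all $[x]-\psi_\ast[x]$; no correction term appears. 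Your first and second paragraphs set up exactly this rho-naturality, but you then abandon it for the primary index, which is where the argument breaks.

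A second, related gap: strong finite embeddability is never actually used in your argument; your rank computation only invokes Remark \ref{rm:feh} for $\Gamma$ itself, and the role of $\Gamma\rtimes F_n$ is relegated to a parenthetical. But the hard step is not the permutation of the $[p_\gamma]$'s under automorphisms; it is showing that no nontrivial combination $\sum_k c_k\varrho(h_{\gamma_k})$ lies in $\mathcal I_1$, where the generators $[x]-\psi_\ast[x]$ involve \emph{arbitrary} classes $x$, not finite-part classes. The paper handles this by forming $X=M\times_{F_m}\widetilde W$ with $\pi_1(X)=\Gamma\rtimes F_m$ ($W$ a wedge of $m$ circles, $m$ the number of diffeomorphisms appearing in a hypothetical relation), pushing the relation forward under $i_\ast$ into $K_1(C^\ast_{L,0}(\widetilde X)^{\Gamma\rtimes F_m})$ — where the Pimsner--Voiculescu sequences show the image of $\sum_j(1-(\psi_j)_\ast)$ dies — and then deriving a contradiction from the injectivity of $\partial_{\Gamma\rtimes F_m}$ on $K_0^\fin(C^\ast(\Gamma\rtimes F_m))$, which is where finite embeddability of $\Gamma\rtimes F_m$ (i.e.\ strong finite embeddability of $\Gamma$) is genuinely needed. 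Without some mechanism of this kind, the step from "the $[\widetilde p_{\gamma_i}]$ are independent in $\rfK_0^\fin(C^\ast(\Gamma))$" to "the $[h_{\gamma_i}]$ are independent in $\widetilde P(M)$" is unsupported.
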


If we denote the space of all positive scalar curvature metrics on $M$ by $\mathcal R^+(M)$, then  $\diff(M)$ again acts on $\mathcal R^+(M)$ by pulling back the metrics. The moduli space of all positive scalar curvature metrics on $M$ is defined to be $\mathcal R^+(M)/\diff(M)$ the quotient space of $\mathcal R^+(M)$ by $\diff(M)$. Piazza and Schick used a different method to show the moduli space $\mathcal R^+(M)/\diff(M)$ has infinitely many connected components when $\dim M = 4k+3 \geq 5$ and the fundamental group $\pi_1(M)$ is not torsion free \cite{MR2366359}. The following result is a refinement of their theorem. 

\begin{theorem}\label{thm:tor}
Let $M$ be a closed spin manifold which carries a positive scalar curvature metric. Suppose $\dim M = 2k+1\geq 5$ and $\Gamma = \pi_1(M)$ is not torsion free. Then the rank of the abelian group $\widetilde P(M)$ is $\geq 1$.
\end{theorem}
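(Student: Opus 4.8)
The plan is to produce a single positive scalar curvature metric whose class in $\widetilde P(M)$ has infinite order. Since $\Gamma$ is not torsion free, fix $\gamma\in\Gamma$ with $\gamma\neq e$ and $\ord(\gamma)=d\geq 2$. By the Weinberger--Yu construction recalled above (Proposition $\ref{prop:gen}$ together with the relative higher index theorem), there is a positive scalar curvature metric $h_\gamma$ on $M$ with $\ind_\Gamma(g_M,h_\gamma)=[p_\gamma]\in K_0(C^\ast_{\max}(\Gamma))=K_0(C^\ast(\widetilde M)^\Gamma)$, where $\widetilde M$ denotes the universal cover of $M$. It then suffices to show that $[h_\gamma]-[g_M]\in P(M)$ has infinite order in $\widetilde P(M)=P(M)/P_0(M)$.

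First I would package the higher rho invariant into a homomorphism. Concordant metrics have equal higher rho invariants, and the higher rho invariant is additive (up to the correction term $\rho(D,g_M)$) under the operation defining the semigroup $P^+(M)$; hence $[g]\mapsto\rho(D,g)-\rho(D,g_M)$ extends to a group homomorphism $\Phi\colon P(M)\to K_1(C_{L,0}^\ast(\widetilde M)^\Gamma)$. By the relative higher index theorem \cite{MR3122162}, $\Phi([h_\gamma]-[g_M])=\rho(D,h_\gamma)-\rho(D,g_M)=\partial(\ind_\Gamma(g_M,h_\gamma))=\partial([p_\gamma])$, where $\partial\colon K_0(C^\ast(\widetilde M)^\Gamma)\to K_1(C_{L,0}^\ast(\widetilde M)^\Gamma)$ is the connecting map of the (maximal) Higson--Roe sequence of $0\to C_{L,0}^\ast(\widetilde M)^\Gamma\to C_L^\ast(\widetilde M)^\Gamma\to C^\ast(\widetilde M)^\Gamma\to 0$.

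The heart of the argument is that $\Phi$ descends to the moduli group. A diffeomorphism $\psi\in\diff(M)$ lifts to a $\psi$-twisted $\Gamma$-equivariant diffeomorphism $\widetilde\psi$ of $\widetilde M$, and conjugation by $\widetilde\psi$ induces an automorphism $\psi_\ast$ of $K_1(C_{L,0}^\ast(\widetilde M)^\Gamma)$ which is independent of the chosen lift (deck transformations act trivially on $\mathbb C[\widetilde M]^\Gamma$) and factors through $\mathrm{Out}(\Gamma)$. Since $\widetilde\psi$ is an isometry intertwining the relevant Dirac operators, it carries the path of unitaries defining $\rho(D,\psi^\ast g)$ to the one defining $\rho(D,g)$, so $\rho(D,\psi^\ast g)=\psi_\ast\,\rho(D,g)$ in $K_1(C_{L,0}^\ast(\widetilde M)^\Gamma)$. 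Propagating this through $\Phi$ — and noting that the $\diff(M)$-action on $P(M)$ is affine rather than linear, with its constant part mapped by $\Phi$ into the subgroup generated by the elements $\psi_\ast\xi-\xi$ — one finds that $\Phi(P_0(M))$ lies in that subgroup. Hence $\Phi$ descends to $\overline\Phi\colon\widetilde P(M)\to K_1(C_{L,0}^\ast(\widetilde M)^\Gamma)_{\diff(M)}$, the $\diff(M)$-coinvariants, sending $[h_\gamma]-[g_M]$ to the class of $\partial([p_\gamma])$.

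Finally I would detect this class numerically, since infinite order in $K_1(C_{L,0}^\ast(\widetilde M)^\Gamma)$ need not survive in the coinvariants. Let $\tau_\Gamma$ be the canonical trace on $C^\ast_{\max}(\Gamma)$ and $\varepsilon$ the trivial representation; the functional $\tau_\Gamma-\varepsilon$ induces the $L^2$-rho (Cheeger--Gromov) homomorphism $\rho_{(2)}\colon K_1(C_{L,0}^\ast(\widetilde M)^\Gamma)\to\mathbb R$ \cite{MR806699, MR2366359}, which satisfies $\rho_{(2)}(\partial([p_\gamma]))=(\tau_\Gamma-\varepsilon)_\ast([p_\gamma])=\tfrac1d-1\neq 0$ because $d\geq 2$. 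As $\tau_\Gamma$ and $\varepsilon$ are both invariant under automorphisms of $\Gamma$, the homomorphism $\rho_{(2)}$ is $\diff(M)$-invariant and descends to the coinvariants; composing with $\overline\Phi$ produces a homomorphism $\widetilde P(M)\to\mathbb R$ carrying $[h_\gamma]-[g_M]$ to $\tfrac{1-d}{d}\neq 0$. Therefore $[h_\gamma]-[g_M]$ has infinite order, so the rank of $\widetilde P(M)$ is at least $1$; since $\Phi$ is moreover a spin-bordism invariant, the same computation shows that $h_\gamma$ and $g_M$ are not spin-bordant. (Because only rank $\geq 1$ is needed here, one could alternatively bypass $K$-theory and argue directly with the real-valued Cheeger--Gromov $L^2$-rho number of $(M,g)$, which is already concordance- and diffeomorphism-invariant and additive.) The main obstacle is the descent step — establishing diffeomorphism-equivariance of the higher rho invariant at the level of $K$-theory (lifts to $\widetilde M$, twisted equivariance, independence of the lift, and the affine nature of the action on $P(M)$) — which is precisely the ingredient unavailable to the purely index-theoretic approach of \cite{Weinberger:2013qy}.
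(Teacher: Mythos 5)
The first half of your argument (the metrics $h_\gamma$ with $\ind_\Gamma(g_M,h_\gamma)=[p_\gamma]$, the homomorphism $\Phi=\varrho$, and the descent computation based on $\rho(D,\psi^\ast h)=\psi_\ast\rho(D,h)$, which uses the affine nature of the $\diff(M)$-action) coincides with the paper's. The divergence is in how you detect nontriviality in the coinvariants, and that is where there is a genuine gap: you assert that the functional $\tau_\Gamma-\varepsilon$ ``induces'' a homomorphism $\rho_{(2)}\colon K_1(C_{L,0}^\ast(\widetilde M)^\Gamma)\to\mathbb R$ with $\rho_{(2)}\circ\partial=(\tau_\Gamma-\varepsilon)_\ast$, citing \cite{MR806699, MR2366359}. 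Neither reference contains such a statement, and it is not a formality: a trace on $C^\ast_{\max}(\Gamma)\otimes\mathcal K$ pairs with $K_0$, but to obtain a functional on $K_1$ of the obstruction algebra $C_{L,0}^\ast(\widetilde M)^\Gamma$ one needs a de la Harpe--Skandalis type determinant for the (semifinite, densely defined) trace $\tau_\Gamma-\varepsilon$ applied to paths of unitaries, with convergence as $t\to\infty$ and homotopy invariance --- in effect the theorem that the Cheeger--Gromov rho invariant factors through the Higson--Roe analytic structure group, which is a substantial result in its own right and is exactly what would have to be proved. Atiyah's $L^2$-index theorem does give you that $(\tau_\Gamma-\varepsilon)_\ast$ vanishes on the image of $\mu\colon K_0^\Gamma(\widetilde M)\to K_0(C^\ast(\Gamma))$, hence a well-defined functional on the image of $\partial$; but your descent to the $\diff(M)$-coinvariants needs the functional on all of $K_1(C_{L,0}^\ast(\widetilde M)^\Gamma)$, since the generators $[x]-\psi_\ast[x]$ of $\mathcal I_1(C_{L,0}^\ast(\widetilde M)^\Gamma)$ need not lie in the image of $\partial$; and although $\mathbb R$ is divisible so some extension exists, an arbitrary extension is not $\diff(M)$-invariant and so does not descend. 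Your parenthetical fallback (work directly with the Cheeger--Gromov number of $(M,g)$) can be made to work, but it is not a quotation either: one must prove concordance invariance, additivity with respect to the sum defining $P(M)$, and the APS/$L^2$-index identity $\rho_{(2)}(h_\gamma)-\rho_{(2)}(g_M)=(\tau_\Gamma-\varepsilon)\big(\ind_\Gamma(g_M,h_\gamma)\big)=\tfrac1d-1$, i.e.\ you would be redoing Piazza--Schick-type analysis rather than citing it.

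By contrast, the paper's proof of Theorem \ref{thm:tor} needs no trace or determinant pairing at all: it runs the proof of Theorem \ref{thm:psc} verbatim for the single element $\gamma$. A putative relation $c\,\varrho(h_\gamma)=\sum_j\big([x_j]-(\psi_j)_\ast[x_j]\big)$ is pushed, via $X=M\times_{F_m}\widetilde W$ with $\pi_1(X)=\Gamma\rtimes F_m$, into $K_1(C_{L,0}^\ast(\widetilde X)^{\Gamma\rtimes F_m})$, where $i_\ast$ kills $\mathcal I_1$ by the Pimsner--Voiculescu sequences \cite{MR670181}; the contradiction then comes from \cite[Theorem 2.3]{Weinberger:2013qy}, which holds for \emph{arbitrary} groups containing torsion and hence applies to $\Gamma\rtimes F_m$ with no embeddability hypothesis, giving injectivity of $\partial_{\Gamma\rtimes F_m}$ on the subgroup generated by $[p_\gamma]$ and $\partial_{\Gamma\rtimes F_m}(c[p_\gamma])=c\,i_\ast\varrho(h_\gamma)$ by \eqref{eq:bd}. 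If you want to salvage your route, you must either prove the factorization of the $L^2$-rho invariant through $K_1(C_{L,0}^\ast(\widetilde M)^\Gamma)$ (and its naturality under $\diff(M)$), or carry out the direct Cheeger--Gromov-number argument in full; note also that even then your method only sees the single functional $\tau_\Gamma-\varepsilon$ and so would not recover Theorem \ref{thm:psc}, whereas the paper's semidirect-product argument treats both theorems uniformly.
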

\begin{proof}
Recall that for any non-torsion-free countable discrete group $G$, if $\gamma \neq e$ is a finite order element of $G$, then $[p_\gamma]$ generates a subgroup of rank one in $K_0(C^\ast(G))$ and any nonzero multiple of $[p_\gamma]$ is not in the image of the assembly map $\mu\colon K_0^\Gamma(EG)\to K_0(C^\ast(G))$ \cite[Theorem 2.3]{Weinberger:2013qy}. Now we apply this fact to $\Gamma\rtimes F_n$, and the theorem is proved the same way as Theorem $\ref{thm:psc}$.  
\end{proof}

The following result is an immediate corollary of Theorem $\ref{thm:tor}$.

\begin{corollary}\label{cor:tor}
Let $M$ be a closed spin manifold with $\dim M = 2k+1 \geq 5$ which carries a positive scalar curvature metric. If $\Gamma =  \pi_1(M)$ is not torsion free,  then the moduli space $\mathcal R^+(M)/\diff(M)$ has infinitely many connected components, i.e.,
\[ |\pi_0(\mathcal R^+(M)/\diff(M))| = \infty.\]
\end{corollary}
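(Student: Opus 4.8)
The plan is to exhibit a $\diff(M)$-invariant, locally constant map $\mathcal R^+(M)\to \widetilde P(M)$ whose image is infinite, and to conclude from this that $\mathcal R^+(M)/\diff(M)$ must have infinitely many connected components.

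First I would build the invariant. For $g\in\mathcal R^+(M)$ let $\langle g\rangle\in P^+(M)$ be its concordance class, and let $\rho(g)$ be the image of $\langle g\rangle$ under the composite $P^+(M)\xrightarrow{\ \iota\ } P(M)\twoheadrightarrow\widetilde P(M)$, where $\iota$ is the canonical map of the monoid $P^+(M)$ into its Grothendieck group $P(M)$. Two properties are needed. (a) $\rho$ is $\diff(M)$-invariant: for $\psi\in\diff(M)$ one has $\langle\psi^\ast g\rangle=\psi^\ast\langle g\rangle$ in $P^+(M)$, and by the very definition of $P_0(M)$ the element $\iota\langle g\rangle-\iota\langle\psi^\ast g\rangle$ lies in $P_0(M)$; hence $\rho(\psi^\ast g)=\rho(g)$ and $\rho$ descends to $\bar\rho\colon \mathcal R^+(M)/\diff(M)\to\widetilde P(M)$. (b) $\rho$ is locally constant: a path of positive scalar curvature metrics from $g_0$ to $g_1$ yields a concordance between them (take $g_{\phi(t)}+N^2\,dt^2$ on $M\times[0,1]$ for a suitable reparametrization $\phi$ and $N$ large, so that the product direction dominates the scalar curvature), so isotopic positive scalar curvature metrics have the same concordance class and hence the same $\rho$-value; since $\mathcal R^+(M)$ is open in a topological vector space it is locally path connected, so $\rho$ is locally constant, i.e.\ continuous into the discrete set $\widetilde P(M)$.

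Next, the quotient map $\pi\colon\mathcal R^+(M)\to\mathcal R^+(M)/\diff(M)$ is a continuous open surjection, so continuity of $\rho=\bar\rho\circ\pi$ into a discrete space forces $\bar\rho$ to be continuous, hence constant on each connected component of $\mathcal R^+(M)/\diff(M)$. It remains to see that $\rho$ takes infinitely many values. Its image is the image $I$ of $P^+(M)$ in $\widetilde P(M)$; since $P(M)$ is the Grothendieck group of the monoid $P^+(M)$, every element of $P(M)$, and therefore (applying the surjection onto $\widetilde P(M)$) every element of $\widetilde P(M)$, is a difference of two elements of $I$. If $I$ were finite, $\widetilde P(M)$ would be finite, contradicting Theorem $\ref{thm:tor}$, which gives $\operatorname{rank}\widetilde P(M)\geq 1$ and in particular $|\widetilde P(M)|=\infty$. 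Hence $I$ is infinite, $\bar\rho$ is constant on components and attains infinitely many values, so $\mathcal R^+(M)/\diff(M)$ has infinitely many connected components. (Alternatively one may invoke directly the metrics $h_\gamma$ and their iterated sums produced in the proof of Theorem $\ref{thm:tor}$, whose $\rho$-values are pairwise distinct.)

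The truly routine points are the topological ones; the step I expect to require the most care is the $\diff(M)$-equivariance in (a) — verifying that the generators $[x]-\psi^\ast[x]$ of $P_0(M)$ indeed annihilate $\iota\langle g\rangle-\iota\langle\psi^\ast g\rangle$, even though the $\diff(M)$-action on $P(M)$ is only affine rather than linear — together with the standard but nontrivial fact that isotopic positive scalar curvature metrics are concordant, which is what makes $\rho$ locally constant.
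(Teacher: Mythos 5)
Your proposal is correct and is essentially the argument the paper has in mind: the paper states the corollary as immediate from Theorem \ref{thm:tor}, and you supply exactly the standard deduction — isotopic positive scalar curvature metrics are concordant, so the class in $\widetilde P(M)$ is a $\diff(M)$-invariant, locally constant function on $\mathcal R^+(M)$, and infinitude of $\widetilde P(M)$ (rank $\geq 1$) forces infinitely many components of the quotient. Your Grothendieck-group observation (finite image of $P^+(M)$ would make $\widetilde P(M)$ finite) is a clean way to get infinitude of the image without tracking the specific metrics $h_\gamma$, and the invariance check via the generators $[x]-\psi^\ast[x]$ of $P_0(M)$ is exactly right despite the action being only affine.
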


Now let us give the proof of Theorem $\ref{thm:psc}$.

\begin{proof}[Proof of Theorem $\ref{thm:psc}$]

Consider the following short exact sequence
\[  0\to C^\ast_{L,0}(\widetilde M)^\Gamma \to C^\ast_L(\widetilde M)^\Gamma \to C^\ast(\widetilde M)^\Gamma \to 0\]
where $\widetilde M$ is the universal cover of $M$. It induces the following six-term long exact sequence:
\[ \xymatrix{K_0(C_{L,0}^\ast(\widetilde M)^\Gamma)\ar[r] &  K_0(C_L^\ast(\widetilde M)^\Gamma)  \ar[r]^{\mu_M} &  K_0(C^\ast(\widetilde M)^\Gamma) \ar[d]^{\partial} \\
K_1(C^\ast(\widetilde M)^\Gamma) \ar[u]& K_{1}(C^\ast_{L}(\widetilde M)^\Gamma) \ar[l] &  K_{1}(C_{L,0}^\ast(\widetilde M)^\Gamma) \ar[l]} \]
Recall that we have $ K_0(C_L^\ast(\widetilde M)^\Gamma) \cong K_0^\Gamma(\widetilde M) $ and $K_0(C^\ast(\widetilde M)^\Gamma) \cong K_0(C^\ast(\Gamma))$.

Fix a positive scalar curvature metric $g_M$ on $M$. For each finite order element $\gamma\in \Gamma$, we can construct a new positive scalar curvature metric $h_\gamma$ on $M$ such that the relative higher index $\ind_\Gamma(g_M, h_\gamma) = [p_\gamma] \in K_0(C^\ast(\Gamma)) $ \cite[Theorem 4.1]{Weinberger:2013qy}. Let us still denote by $h_\gamma$ (resp. $g_M$) the metric on $\widetilde M$ lifted from the metric $h_\gamma$ (resp. $g_M$) on $M$. Let $\rho(D, h_\gamma)$ and $\rho(D, g_M)$ be the higher rho invariants for the pairs $(D, h_\gamma)$ and $(D, g_M)$, where $D$ is the Dirac operator on $\widetilde M$ (cf. Section $\ref{sec:rho}$). 
Then we have
\begin{equation}\label{eq:bd}
\partial([p_\gamma]) = \partial\big(\ind_\Gamma(g_M, h_\gamma)\big) = \rho(D, h_\gamma) - \rho(D, g_M),
\end{equation}
(cf.  \cite[Theorem 1.17]{Piazza:2012fk}, \cite[Theorem 4.1]{Xie2014823}).

One of the key points of the proof is to construct a certain group homomorphism on $\widetilde P(M)$ which can be used to distinguish elements in $\widetilde P(M)$. First, we define a map $ \varrho: P(M) \to  K_{1}(C_{L,0}^\ast(\widetilde M)^\Gamma)$  by 
\[\varrho(h) := \rho(D, h) - \rho(D, g_M)\]
 for all $h\in P(M)$. It follows from the definition of $P(M)$ and  \cite[Theorem 4.1]{Xie2014823} that the map $\varrho$ is a well-defined group homomorphism. Now recall that a diffeomorphism $\psi \in \diff(M)$ induces a homomorphism
\[\psi_\ast: K_{1}(C_{L,0}^\ast(\widetilde M)^\Gamma)\to K_{1}(C_{L,0}^\ast(\widetilde M)^\Gamma).\]
Let $\mathcal I_1(C_{L,0}^\ast(\widetilde M)^\Gamma)$ be the subgroup of $K_{1}(C_{L,0}^\ast(\widetilde M)^\Gamma)$  generated by elements of the form $[x] - \psi_\ast[x]$ for all $[x]\in K_{1}(C_{L,0}^\ast(\widetilde M)^\Gamma)$ and all $\psi\in \diff(M)$. We see that $\varrho$ descends to a group homomorphism
\[ \widetilde \varrho: \widetilde P(M) \to K_{1}(C_{L,0}^\ast(\widetilde M)^\Gamma)\big/\mathcal I_1(C_{L,0}^\ast(\widetilde M)^\Gamma). \]
To see this, it suffices to verify that
\[  \varrho(h) - \varrho(\psi^\ast(h)) \in \mathcal I_1(C_{L,0}^\ast(\widetilde M)^\Gamma) \]
for all $[h]\in P(M)$ and $\psi\in \diff(M)$. Indeed, we have
\begin{align*}
 \varrho(h) - \varrho(\psi^\ast(h)) & = \rho(D, h) - \rho(D, g_M) - \big (\rho(D, \psi^\ast (h)) - \rho(D, g_M) \big) \\ 
 & = \rho(D, h) - \rho(D, \psi^\ast (h)) \\
 & = \rho(D, h) - \psi_\ast( \rho(D, h)) \in \mathcal I_1(C_{L,0}^\ast(\widetilde M)^\Gamma).
\end{align*}

We remark that it is crucial to use the higher rho invariant, instead of the relative higher index, to construct such a group homomorphism.  Let us explain the subtlety here.  Note that there is in fact a well-defined group homomorphism 
$ \ind_{rel}: P(M) \to  K_{0}(C^\ast(\Gamma))$ by $ \ind_{rel} (h)  = \ind_\Gamma(D; g_M, h).$
The well-definedness of $\ind_{rel}$ follows from the definition of $P(M)$ and the relative higher index theorem \cite{UB95}\cite{MR3122162}. 
However, in general, it is \textit{not} clear at all whether $\ind_{rel}$ descends to a group homomorphism $\widetilde P(M) \to  K_{0}(C^\ast(\Gamma))/\mathcal I_0(C^\ast(\Gamma)),$ 
where 	$\mathcal I_0(C^\ast(\Gamma))$ is the subgroup of $K_{0}(C^\ast(\Gamma))$  generated by elements of the form $[x] - \psi_\ast[x]$ for all $[x]\in K_{0}(C^\ast(\Gamma))$ and all $\psi\in \diff(M)$.

Now for a collection of elements $\{\gamma_1, \cdots, \gamma_n\}$ with distinct finite orders, we consider the associated collection of positive scalar curvature metrics $\{h_{\gamma_1}, \cdots, h_{\gamma_n}\}$ as before. To prove the theorem, it suffices to show that for any collection of elements $\{\gamma_1, \cdots, \gamma_n\}$ with distinct finite orders, the elements 
\[ \widetilde \varrho(h_{\gamma_1}), \cdots, \widetilde \varrho(h_{\gamma_n})\]  
are linearly independent in $K_{1}(C_{L,0}^\ast(\widetilde M)^\Gamma)\big/\mathcal I_1(C_{L,0}^\ast(\widetilde M)^\Gamma)$. 

Let us assume the contrary, that is, there exist $[x_1], \cdots, [x_m]\in K_{1}(C_{L,0}^\ast(\widetilde M)^\Gamma) $ and $\psi_1, \cdots, \psi_m\in \diff(M)$ such that
\begin{equation}\label{eq:van}
\sum_{i=1}^n c_i \varrho(h_{\gamma_i}) = \sum_{j=1}^{m} \big([x_j] - (\psi_{j})_\ast[x_j]\big),
\end{equation}
where $c_1, \cdots, c_n\in \mathbb Z$ with at least one $c_i\neq 0$.

We denote by $W$ the wedge sum of $m$ circles. The fundamental group $\pi_1(W)$ is the free group $F_m$ of $m$ generators $\{s_1, \cdots, s_m\}$. We denote the universal cover of $W$ by $\widetilde W$. Clearly, $\widetilde W$ is  the Cayley graph of $F_m$ with respect to the generating set $\{s_1, \cdots, s_m, s_1^{-1}, \cdots, s_m^{-1}\}$. Notice that $F_m$ acts on $M$ through the diffeomorphisms $\psi_1, \cdots, \psi_m$. In other words, we have a homomorphism  $F_m \to \diff(M)$ by $s_i\mapsto \psi_i$. We define
\[  X = M \times_{F_m}\widetilde W. \]
Notice that $\pi_1(X) = \Gamma\rtimes_{\{\psi_1, \cdots, \psi_m\}} F_m$. Let us  write $\Gamma\rtimes F_m$ for $\Gamma\rtimes_{\{\psi_1, \cdots, \psi_m\}} F_m$, if no confusion arises.

Let $\widetilde X$ be the universal cover of $X$.  We have the following short exact sequence: 
\[  0\to C^\ast_{L,0}(\widetilde X)^{\Gamma\rtimes F_m} \to C^\ast_L(\widetilde X)^{\Gamma\rtimes F_m} \to C^\ast(\widetilde X)^{\Gamma\rtimes F_m} \to 0.\]
 Recall that  $ K_0(C_L^\ast(\widetilde X)^{\Gamma\rtimes F_m}) \cong K_0^{\Gamma\rtimes F_m}(\widetilde X) $ and $K_0(C^\ast(\widetilde X)^{\Gamma\rtimes F_m}) \cong K_0(C^\ast(\Gamma\rtimes F_m))$. So we have the following  six-term long exact sequence:
\begin{equation}\label{cd:bc}
\begin{gathered}
\xymatrix{K_0(C_{L,0}^\ast(\widetilde X)^{\Gamma\rtimes F_m})\ar[r] &  K_0^{\Gamma\rtimes F_m}(\widetilde X)  \ar[r] &  K_0(C^\ast(\Gamma\rtimes F_m)) \ar[d]^{\partial} \\
K_1(C^\ast(\Gamma\rtimes F_m)) \ar[u]& K_{1}^{\Gamma\rtimes F_m}(\widetilde X) \ar[l] &  K_{1}(C_{L,0}^\ast(\widetilde X)^{\Gamma\rtimes F_m}) \ar[l]} 
\end{gathered}
\end{equation}
Now recall the following Pimsner-Voiculescu exact sequence \cite{MR670181}:
\[ \xymatrixcolsep{6pc} \xymatrix{ \bigoplus_{j=1}^m K_0(C^\ast(\Gamma))\ar[r]^-{\sum_{j=1}^m 1-(\psi_j)_\ast} &   K_0(C^\ast(\Gamma))  \ar[r]^{i_\ast} &  K_0(C^\ast(\Gamma\rtimes F_m)) \ar[d] \\
K_1(C^\ast(\Gamma\rtimes F_m)) \ar[u]& K_{1}(C^\ast(\Gamma)) \ar[l] &  \bigoplus_{j=1}^m K_{1}(C^\ast(\Gamma)) \ar[l]_-{\sum_{j=1}^m 1-(\psi_j)_\ast}
} \]
where $(\psi_j)_\ast$ is induced by $\psi_j$ and $i_\ast$ is induced by the inclusion map of $\Gamma$ into $\Gamma\rtimes F_m$. Similarly, we also have the following two Pimsner-Voiculescu type exact sequences for $K$-homology and the $K$-theory groups of $C_{L,0}^\ast$-algebras in the diagram $\eqref{cd:bc}$ above.
\[ \xymatrixcolsep{6pc} \xymatrix{\bigoplus_{i=1}^m K_0^\Gamma(\widetilde M)\ar[r]^-{\sum_{j=1}^m 1-(\psi_j)_\ast} &  K_0^\Gamma(\widetilde M)  \ar[r]^{i_\ast} &  K_0^{\Gamma\rtimes F_m}(\widetilde X) \ar[d] \\
K_1^{\Gamma\rtimes F_m}(\widetilde X) \ar[u]& K_{1}^\Gamma(\widetilde M) \ar[l]&  \bigoplus_{i=1}^m K_{1}^\Gamma(\widetilde M) \ar[l]_-{\sum_{j=1}^m 1-(\psi_j)_\ast}  } \]
\[\xymatrixcolsep{5pc} \xymatrix{\bigoplus_{i=1}^m K_0(C_{L,0}^\ast(\widetilde M)^\Gamma)\ar[r]^-{\sum_{j=1}^m 1-(\psi_j)_\ast}  &  K_0(C_{L,0}^\ast(\widetilde M)^\Gamma)  \ar[r]^{i_\ast} &  K_0(C_{L,0}^\ast(\widetilde X)^{\Gamma\rtimes F_m}) \ar[d] \\
K_1(C_{L,0}^\ast(\widetilde X)^{\Gamma\rtimes F_m}) \ar[u]& K_{1}(C_{L,0}^\ast(\widetilde M)^\Gamma) \ar[l] & \bigoplus_{i=1}^m  K_{1}(C_{L,0}^\ast(\widetilde M)^\Gamma) \ar[l]_-{\sum_{j=1}^m 1-(\psi_j)_\ast} } \]
where again $(\psi_j)_\ast$ and $i_\ast$ are defined in the obvious way.

Combining these Pimsner-Voiculescu exact sequences together, we have the following commutative diagram: 
\begin{equation}\label{cd:pv}
\begin{gathered}
\scalebox{1}{ \xymatrix{  
& \vdots\ar[d]  & \vdots \ar[d] & \vdots\ar[d] & \\
 \ar[r] & \bigoplus_{j=1}^m K_0^{\Gamma}(\widetilde M) \ar[d] \ar[r]^-\sigma & K_0^{\Gamma}(\widetilde M) \ar[d] \ar[r]^-{i_\ast}  & K_0^{\Gamma\rtimes F_m}(\widetilde X) \ar[d]^{\mu} \ar[r] & \\
 \ar[r] &  \bigoplus_{j=1}^m K_0(C^\ast(\Gamma))\ar[r]^-{\sigma} \ar[d] &  K_0(C^\ast(\Gamma))  \ar[r]^-{i_\ast} \ar[d] &  K_0(C^\ast(\Gamma\rtimes F_m)) \ar[d]^{\partial_{\Gamma\rtimes F_m}} \ar[r] & \\
 \ar[r] &  \bigoplus_{j=1}^m K_{1}(C_{L,0}^\ast(\widetilde M)^{\Gamma}) \ar[r]^-\sigma \ar[d] & K_{1}(C_{L,0}^\ast(\widetilde M)^{\Gamma}) \ar[r]^-{i_\ast} \ar[d] & K_{1}(C_{L,0}^\ast(\widetilde X)^{\Gamma\rtimes F_m}) \ar[r] \ar[d] &  \\
& \vdots  & \vdots  & \vdots & \\
} }
\end{gathered}
\end{equation}
where $\sigma = \sum_{j=1}^m 1-(\psi_j)_\ast$. Notice that all rows and  columns are exact. 

Now on one hand,  if we pass Equation $\eqref{eq:van}$ to $K_{1}(C_{L,0}^\ast(\widetilde X)^{\Gamma\rtimes F_m})$ under the map $i_\ast$, then it follows immediately that 
\[  \sum_{k=1}^n c_k\cdot i_\ast[\varrho(h_{\gamma_k})] = 0 \textup{ in $K_{1}(C_{L,0}^\ast(\widetilde X)^{\Gamma\rtimes F_m})$}  , \]
where at least one $c_k\neq 0$. On the other hand, by assumption, $\Gamma$ is strongly finitely embeddable into Hilbert space. Hence $\Gamma\rtimes F_m$ is finitely embeddable into Hilbert space. Therefore, Conjecture $\ref{wyconj}$ holds for $\Gamma\rtimes F_m$, which implies the following.
\begin{enumerate}[(i)]
\item $\{[p_{\gamma_1}], \cdots, [p_{\gamma_n}]\}$ generates a rank $n$ abelian subgroup of $K_0^\fin(C^\ast(\Gamma\rtimes F_m))$, since $\gamma_1, \cdots, \gamma_n$ have distinct finite orders. In other words, 
\[ \sum_{k=1}^n c_k [p_{\gamma_k}] \neq 0  \in K_0^\fin(C^\ast(\Gamma\rtimes F_m))\]
if at least one $c_k\neq 0$.
\item Every nonzero element in $K_0^\fin(C^\ast(\Gamma\rtimes F_m))$ is not in the image of the assembly map 
\[ \mu: K_0^{\Gamma\rtimes F_m}(E(\Gamma\rtimes F_m)) \to K_0(C^\ast(\Gamma\rtimes F_m)), \]
where $E(\Gamma\rtimes F_m)$ is the universal space for proper and free $\Gamma\rtimes F_m$-action. In particular, every nonzero element in $K_0^\fin(C^\ast(\Gamma\rtimes F_m))$ is not in the image of the map 
\[ \mu: K_0^{\Gamma\rtimes F_m}(\widetilde X) \to K_0(C^\ast(\Gamma\rtimes F_m)) \]
in diagram $\eqref{cd:pv}$. It follows that the map 
\[ \partial_{\Gamma\rtimes F_m}: K^\fin_0(C^\ast(\Gamma\rtimes F_m)) \to K_{1}(C_{L,0}^\ast(\widetilde X)^{\Gamma\rtimes F_m}) \]
is injective. In other words, $\partial_{\Gamma\rtimes F_m}$ maps a nonzero element in $ K^\fin_0(C^\ast(\Gamma\rtimes F_m))$ to a nonzero element in $K_{1}(C_{L,0}^\ast(\widetilde X)^{\Gamma\rtimes F_m}) $. 
\end{enumerate} 
To summarize, we have 
\begin{enumerate}[(a)]
\item $\sum_{k=1}^n c_k [p_{\gamma_k}] \neq 0$ in $K^\fin_0(C^\ast(\Gamma\rtimes F_m))$,
\item  $ \sum_{k=1}^n c_k \cdot i_\ast[\varrho(h_{\gamma_k})] = 0 $ in $K_{1}(C_{L,0}^\ast(\widetilde X)^{\Gamma\rtimes F_m})$, 
\item the map $ \partial_{\Gamma\rtimes F_m}: K^\fin_0(C^\ast(\Gamma\rtimes F_m)) \to K_{1}(C_{L,0}^\ast(\widetilde X)^{\Gamma\rtimes F_m})$ is injective,
\item and by Equation $\eqref{eq:bd}$,  $ \partial_{\Gamma\rtimes F_m} \Big( \sum_{k=1}^n c_k [p_{\gamma_k}] \Big) =  \sum_{k=1}^n c_k \cdot i_\ast[\varrho(h_{\gamma_k})]$.
\end{enumerate}
Therefore, we arrive at a contradiction. This finishes the proof.

\end{proof}

\begin{remark}
In the above proof, we used the higher rho invariant to show that a particular collection of elements in $\widetilde P(M)$ are linearly independent. Recall that the higher rho invariant is constant within a spin-bordism class of positive scalar curvature metrics, cf. \cite[Theorem 4.1]{Xie2014823}. Therefore, our proof in fact shows that this particular collection of concordance classes of positive scalar curvature metrics give rise to distinct spin-bordism classes of positive scalar curvature metrics. 
\end{remark}

\begin{remark}
All the above discussion also applies to the case where $M$ is even dimensional,  with an additional assumption on its fundamental group $\pi_1(M)$. More precisely, the following analogues of Theorem $\ref{thm:psc}$, Theorem $\ref{thm:tor}$ and Corollary $\ref{cor:tor}$ hold. Throughout this remark, let us assume that  $M$ is a closed spin manifold  with $\dim M = 2k \geq 5$ which carries a positive scalar curvature metric, and there is a surjective group homomorphism $\phi: \Gamma = \pi_1(M) \twoheadrightarrow \mathbb Z\times G$ for some group $G$.
\begin{enumerate}[(i)]
\item If $G$ is strongly finite embeddable, then the rank of $\widetilde P(M)$ is  $\geq N_\fin(G)$. 
\item If $G$ is not torsion-free, then the rank of $\widetilde P(M)$ is  $\geq 1$. In particular, this implies that  $ |\pi_0(\mathcal R^+(M)/\diff(M))| = \infty.$
\end{enumerate}
The proofs of these statements are essentially the same as those in the odd case, combined with a suspension argument. More precisely, we have the following observations.   
\begin{enumerate}[(a)]
\item The surjection $\phi: \Gamma = \pi_1(M) \twoheadrightarrow \mathbb Z\times G$ determines a  $(\mathbb Z\times G)$-covering of $M$, denoted by $M_{\mathbb Z\times G}$. We work with $M_{\mathbb Z\times G}$ instead of the universal covering $\widetilde M$. 
\item For each nontrivial torsion element $\gamma\in G$, let $p_\gamma = \sum_{k=1}^{d} \gamma^k$ with $d = \ord(\gamma)$. We fix a positive scalar curvature metric $g_M$ on $M$ and use the manifold $S^1\times Y$ to construct new positive scalar metrics $h_\gamma$ on $M$, where $Y$ is a manifold as in Proposition $\ref{prop:gen}$. \textit{Notice that the suspension is applied to the manifold $Y$, not the manifold $M$.} Recall that 
 \[ K_1(C^\ast(\mathbb Z\times G)) = \big(K_1(C^\ast(\mathbb Z))\otimes K_0(C^\ast(G)) \big) \oplus \big( K_0(C^\ast(\mathbb Z))\otimes K_1(C^\ast(G)) \big). \] Let $D_{S^1}$ be the standard Dirac operator on $S^1$ and $u = \ind_{\mathbb Z}(D_{S^1})\in  K_1(C^\ast(\mathbb Z))$. Note that $u$ is a generator of $K_1(C^\ast(\mathbb Z)) \cong \mathbb Z$. Moreover, we have 
 \[ \ind_{\mathbb Z\times G} (g_M, h_\gamma)  = [u]\otimes [p_\gamma] \in K_1(C^\ast(\mathbb Z))\otimes K_0(C^\ast(G)).  \]
Recall that $p_\gamma$ is not in the image of the assembly map (for free actions) $\mu: K_0^{G}(EG) \to K_0(C^\ast(G))$. It follows that $[u]\otimes [p_\gamma]$ is not in the image of the assembly map (for free actions) $\mu: K_0^{\mathbb Z\times G}(E(\mathbb Z\times G)) \to K_0(C^\ast(\mathbb Z\times G))$. This argument also applies to the case of linear combinations of $[p_{\gamma_i}]$ for a collection of elements  $\{\gamma_1, \cdots, \gamma_n\}$ with distinct finite orders in $G$.

\item Various terms in the proof for the odd case above now switch parities. For example, the higher rho invariant $\rho(D_M, h_\gamma)$ now lies in $K_0(C_{L,0}^\ast(M_{\mathbb Z\times G})^{\mathbb Z\times G})$.

\end{enumerate}

\end{remark}

%\bibliographystyle{abbrv}
%\bibliography{/Users/zhizhangxie/Dropbox/Reference/ref}
%\nocite{GY00, Gong:2013fk, MR1616436}

\end{document}